 \theoremstyle{definition}  
  \newtheorem{defn}{Definition}[section]
  \newtheorem{eg}[defn]{Example}
   \newtheorem{note}[defn]{Note}
  \theoremstyle{plain}  
  \newtheorem{thm}[defn]{Theorem}
  \newtheorem{lem}[defn]{Lemma}
  \newtheorem{prop}[defn]{Proposition}
  \newtheorem{cor}[defn]{Corollary}
  \newtheorem{rmk}[defn]{Remark}
  \theoremstyle{remark}
  \newtheorem*{notn}{Notation}
 \renewcommand{\sf}[1]{\textsf{#1}}
 \newcommand{\mbb}[1]{\mathbb{#1}}
 \newcommand{\mcl}[1]{\mathcal{#1}}
 \newcommand{\msc}[1]{\mathscr{#1}}
 \newcommand{\ol}[1]{\overline{#1}}
 \newcommand{\ul}[1]{\underline{#1}}
 \newcommand{\norm}[1]{\left\lVert#1\right\rVert}
 \newcommand{\M}[1]{\mbb{M}_{#1}}
 \newcommand{\B}[1]{\msc{B}({#1})}    
 \newcommand{\ranko}[2]{|{#1}\rangle\langle{#2}|}
 \newcommand{\ip}[1]{\langle#1\rangle}
 \newcommand{\ran}[1]{\sf{range}(#1)}
 \newcommand{\mscriptsize}[1]{{\setlength{\arraycolsep}{.3ex}\text{\scriptsize$#1$}}}
 \newcommand{\Matrix}[1]{\begin{bmatrix}#1\end{bmatrix}}
 \newcommand{\sMatrix}[1]{\mscriptsize{\Matrix{#1}}}
 \DeclareMathOperator{\T}{\sf{T}}
 \DeclareMathOperator{\tr}{\sf{tr}}
 \DeclareMathOperator{\id}{\sf{id}}
 \DeclareMathOperator{\lspan}{\sf{span}}
 \DeclareMathOperator{\cspan}{\ol{\lspan}}
 \numberwithin{equation}{section}
 \setlist[enumerate]{font=\upshape,noitemsep, topsep=0pt} 
 \setlist[itemize]{noitemsep, topsep=0pt}
\begin{document}

\title[$C^*$-extreme points]{$C^*$-extreme points of entanglement breaking maps}

\author{B. V. Rajarama Bhat}
\address{Indian Statistical Institute, Stat Math Unit, RV College Post, 8th Mile Mysore Road, Bengaluru, 560059, India}
\email{bvrajaramabhat@gmail.com, bhat@isibang.ac.in}

\author{Repana Devendra}
\address{Indian Institute of Technology Madras, Department of Mathematics, Chennai, Tamilnadu 600036, India}
\email{r.deva1992@gmail.com}

\author{Nirupama Mallick}
\address{Chennai Mathematical Institute, H1, SIPCOT IT Park, Siruseri, Kelambakkam 603103, India}
\email{niru.mallick@gmail.com}

\author{K. Sumesh}
\address{Indian Institute of Technology Madras, Department of Mathematics, Chennai, Tamilnadu 600036, India}
\email{sumeshkpl@gmail.com, sumeshkpl@iitm.ac.in}

\date{\today}

\begin{abstract}
 In this paper we study the $C^*$-convex set of unital entanglement breaking (EB-)maps on matrix algebras. General properties and an abstract characterization of $C^*$-extreme points are discussed. By establishing a Radon-Nikodym type theorem for a class of EB-maps we give a complete description of the $C^*$-extreme points. It is shown that a unital EB-map $\Phi:\M{d_1}\to\M{d_2}$ is $C^*$-extreme if and only if it has Choi-rank equal to $d_2$. Finally, as a direct consequence of the Holevo form of EB-maps, we derive a noncommutative analogue of the Krein-Milman theorem for $C^*$-convexity of the set of unital EB-maps.   
\end{abstract}

\keywords{Entanglement breaking maps, Radon-Nikodym type theorem, $C^*$-convexity, $C^*$-extreme points, Krein-Milman theorem}

\subjclass[2020]{81P40 (primary), 47L07, 46L30, 81R15, 81P42 (secondary)}

\maketitle


\section{Introduction}

 Completely positive (CP-)maps between matrix algebras that are entanglement breaking (EB) plays an important role in quantum information theory. The class of (unital) CP-maps and its subclass of (unital) EB-maps form ($C^*$-)convex sets, and hence one would like to identify its extreme points. Though the linear extreme points of the convex set of unital CP-maps are not well-understood objects, its quantum analogue, called \emph{$C^*$-extreme points}, have been explored to a great extent (see \cite{FaMo97, FaZh98, Gre09, BBK21, BhKu21}). A complete description of $C^*$-extreme points of the convex set of unital CP-maps on matrix algebras is known (\cite{FaMo97, FaZh98}). But, except for some necessary or sufficient conditions (\cite{Gre09, BBK21, BhKu21}), a complete description of the structure of $C^*$-extreme points is unknown in general, for example,  when the underlying Hilbert space is infinite-dimensional.  
 
 Though various characterizations and structure theorems for EB-maps between matrix algebras are known (\cite{HSR03, Hol98}),  the structure of linear extreme points of the convex set of unital or trace-preserving EB-maps is not well-understood. In \cite{HSR03}, the authors considered a class of trace-preserving EB-maps, called \emph{extreme classical-quantum (CQ)} maps, and showed that such maps are linear extreme points of the convex set of  trace-preserving  EB-maps; but they are not all in general (\cite[Counterexample]{HSR03}).  (Note that a CP-map $\Phi:\M{d_1}\to\M{d_2}$ is unital if and only if the CP-map $\Phi^*:\M{d_2}\to\M{d_1}$ is trace-preserving, where $\Phi^*$ is the adjoint of $\Phi$ with respect to the Hilbert-Schmidt inner product given by $\ip{A,B}:=\tr(A^*B)$.)  In this article, we introduce and study $C^*$-extreme points of the convex set of unital EB-maps.  
 
 We organize the paper as follows. Section  \ref{sec-preli} recalls some definitions and basic results and fix some notations.  In Section \ref{sec-basic-pro}, we observe that some of the basic properties of $C^*$-extreme points of unital CP-maps will also hold in the case of unital EB-maps. In particular, every $C^*$-extreme point of unital EB-maps is a linear extreme point (Proposition \ref{prop-Cext-Lext}).  In the case of unital CP-maps, the Radon Nikodym theorem for CP-maps (\cite{Arv69a}) provides the basic framework for determining its $C^*$-extreme points. Keeping this in mind, in Section \ref{sec-Rad-Nik}, we prove a   Radon Nikodym type theorem (Theorem \ref{thm-Rad-Nik}) for a particular class of EB-maps. In Section \ref{sec-Cext-strctr}, using this result, we give an abstract characterization (Theorem \ref{thm-C-extrm-char}) and further prove a structure theorem (Theorem \ref{thm-EB-ext-decomp}) for $C^*$-extreme points of unital EB-maps. It is seen that whether an EB-map is $C^*$-extreme or not can be determined by knowing its Choi-rank. The only $C^*$-extreme points of unital EB-maps are direct sum of pure states, and this establishes that  (Remark \ref{rmk-Cext-CQ}) adjoints of the class of extreme CQ-maps discussed in \cite{HSR03} coincides with the $C^*$-extreme points. Finally, a Krein-Milman type theorem is proved for the $C^*$-convexity of the convex cone of unital EB-maps.

\section{Preliminaries}\label{sec-preli} 

 Through out $d, d_1, d_2\in\mathbb{N}$, and $\{e_i\}_{i=1}^d\subseteq\mbb{C}^d$ denote the standard orthonormal basis. We let $\M{d_1\times d_2}$ denote the space of all complex matrices of size $d_1\times d_2$, and $\M{d}^+$ denote the cone of all positive semidefinite matrices in $\M{d}:=\M{d\times d}$. We write $A=[a_{ij}]\in\M{d_1\times d_2}$ to denote that $A$ is a complex matrix of size $d_1\times d_2$ with $(i,j)^{th}$ entry $a_{ij}\in\mbb{C}$.     Given $A=[A_{ij}]\in\M{d_1}$ and $B=[b_{ij}]\in\M{d_2}$ we define $A\otimes B:=[a_{ij}B]$, and thereby identify $\M{d_1}\otimes\M{d_2}=\M{d_1}(\M{d_2})=\M{d_1d_2}$. A matrix $X\in(\M{d_1}\otimes\M{d_2})^+$ is said to be \emph{separable} if $X=\sum_{i=1}^nA_i\otimes B_i$ for some $A_i\in\M{d_1}^+$ and $B_i\in\M{d_2}^+$; otherwise called \emph{entangled}.   If $X\in(\M{d_1}\otimes\M{d_2})^+$ is separable, then the \emph{optimal ensemble cardinality} (\cite{DTT00}) or \emph{length} (\cite{CDD13}) of $X$ is   the minimum number $\ell(X)$ of rank-one positive operators $A_i, B_i$ required to write $X=\sum A_i\otimes B_i$.  Clearly rank$(X)\leq\ell(X)$; strict inequality can also happen (\cite{DTT00}).
 
 A linear map $\Phi:\M{d_1}\to\M{d_2}$ is said to be a \emph{completely positive (CP-)map} if for every $k\geq 1$ the map $\id_k\otimes\Phi:\M{k}\otimes\M{d_1}\to\M{k}\otimes\M{d_2}$ satisfies $(\id_k\otimes\Phi)((\M{k}\otimes\M{d_1})^+)\subseteq(\M{k}\otimes\M{d_2})^+$,  where $\id_k:\M{k}\to\M{k}$ is the identity map. A linear map $\Phi:\M{d_1}\to\M{d_2}$ is a CP-map  if and only if the \emph{Choi-matrix} (\cite{Cho75}),
  \begin{align*}
        C_\Phi:=\sum_{i,j=1}^{d_1}E_{ij}\otimes\Phi(E_{ij})=[\Phi(E_{ij})]\in(\M{d_1}\otimes\M{d_2})=\M{d_1}(\M{d_2})
   \end{align*}
 is positive semidefinite, where $E_{ij}:=\ranko{e_i}{e_j}\in\M{d_1}$ for all $1\leq i,j\leq d_1$. This condition is also equivalent to saying that $\Phi$ has a \emph{Kraus decomposition} (\cite{Kra71}), i.e., 
        $$\Phi=\sum_{i=1}^n\mathrm{Ad}_{V_i}$$
 for some $V_i\in\M{d_1\times d_2}$ called Kraus operators, where $\mathrm{Ad}_V(X):=V^*XV$ for all $X\in\M{d_1}$. There is no uniqueness in Kraus decomposition. But, if $\Phi=\sum_{i=1}^n\mathrm{Ad}_{V_i}=\sum_{i=1}^m\mathrm{Ad}_{W_i}$ are two Kraus decomposition, then it can be shown that $\lspan\{V_i:1\leq i\leq n\}=\lspan\{W_i: 1\leq i\leq m\}$.  The minimum number of linearly independent Kraus' operators required to represent $\Phi$ by a Kraus decomposition is known as the \emph{Choi-rank} of $\Phi$. 

 We let $\mathrm{UCP}(d_1,d_2)$ denote the space of all unital CP-maps from $\M{d_1}$ into $\M{d_2}$. Elements of $\mathrm{UCP}(d,1)$ are called \emph{states}. Given any $\Phi\in\mathrm{UCP}(d_1,d_2)$, due to Stinespring (\cite{Sti55}), there exists a triple $(\pi,V,\mcl{K})$, called \emph{Stinespring's dilation}, consisting of a Hilbert space $\mcl{K}$, a representation $\pi$ of $\M{d_1}$ in the algebra $\B{\mcl{K}}$ of all bounded linear maps on $\mcl{K}$, and an isometry $V:\mbb{C}^{d_2}\to\mcl{K}$ such that $\Phi=\mathrm{Ad}_V\circ\pi$; the triple is said to be \emph{minimal} if $\cspan\{\pi(\M{d_1})V\mbb{C}^{d_2}\}=\mcl{K}$, and such a triple is unique up to unitary equivalence. If $\Psi:\M{d_1}\to\M{d_2}$ is a CP-map such that the difference $\Phi-\Psi$ is also a CP-map, then we write $\Psi\leq_{CP}\Phi$. In such cases, due to \cite[Theorem 1.4.2]{Arv69a}, there exists a positive contraction $T$ in the commutant $\pi(\M{d_1})'\subseteq\B{\mcl{K}}$ of $\pi(\M{d_1})$ such that $\Psi(X)=V^*T\pi(X)V$ for all $X\in\M{d_1}$. (The conclusion holds even if $(\pi,V,\mcl{K})$ is not minimal.)  
 
 A completely positive map $\Phi:\M{d_1}\to\M{d_2}$ is said to be
 \begin{itemize}
     \item 
          \emph{irreducible} if $\ran{\Phi}'=\mbb{C}I_{d_2}$; equivalently $\ran{\Phi}$ has only trivial invariant subspaces; 
     \item
          \emph{pure} if, whenever $\Psi:\M{d_1}\to\M{d_2}$ is a CP-map such that $\Psi\leq_{CP}\Phi$, then $\Psi=\lambda\Phi$ for some scalar $\lambda\geq 0$.     
 \end{itemize}
 There are different notions of irreducibility for CP-maps. The definition presented here is motivated from the representation theory of $C^*$-algebras. It can be seen easily that a state $\phi:\M{d}\to\mbb{C}$ is pure if and only if $\phi(X)=\ip{u,Xu}=\tr(X\ranko{u}{u})$ for some unit vector $u\in\mbb{C}^d$, where given $x\in\mbb{C}^{d_1}$ and $y\in\mbb{C}^{d_2}$ the matrix $\ranko{x}{y}\in\M{d_1\times d_2}$ defines the linear map $z\mapsto \ip{y,z}x$ from $\mbb{C}^{d_2}$ to $\mbb{C}^{d_1}$.

 Suppose $\Phi_i\in\mathrm{UCP}(d_1,d_2)$ and $T_i\in\M{d_2},1\leq i\leq n$ are such that $\sum_{i=1}^nT_i^*T_i=I$. Then the sum $\sum_{i=1}^n\mathrm{Ad}_{T_i}\circ\Phi_i$ is called a \emph{$C^*$-convex combination}, which is said to be \emph{proper} if all the $T_i$'s are invertible. A linear map $\Phi\in\mathrm{UCP}(d_1,d_2)$  is said to be a
 \begin{itemize}
     \item \emph{linear extreme point} of $\mathrm{UCP}(d_1,d_2)$ if, whenever $\Phi=\sum_{i=1}^n t_i\Phi_i$, where  $\Phi_i\in\mathrm{UCP}(d_1,d_2)$ and $t_i\in (0,1)$ with $\sum_{i=1}^n t_i=1$, then $\Phi_i=\Phi$ for all $1\leq i\leq n$;
     \item \emph{$C^*$-extreme point} of $\mathrm{UCP}(d_1,d_2)$  if, whenever $\Phi$ is a proper $C^*$-convex combination, say $\Phi=\sum_{i=1}^n \mathrm{Ad}_{T_i}\circ\Phi_i$, then each  $\Phi_i$ is unitarily equivalent to $\Phi$, i.e., there exist unitaries $U_i\in\M{d_2}$ such that $\Phi_i=\mathrm{Ad}_{U_i}\circ\Phi$ for all $1\leq i\leq n$.
 \end{itemize}

 A linear map $\Phi:\M{d_1}\to\M{d_2}$ is said to be a \emph{PPT-map} if both $\Phi$ and $\T\circ\Phi$ are CP-maps (equivalently, both $C_\Phi$ and  $(\id_{d_1}\otimes\T)(C_\Phi)$ are elements of $(\M{d_1}\otimes\M{d_2})^+$), where $\T:\M{d_2}\to\M{d_2}$ denotes the transpose map. A completely positive map  $\Phi:\M{d_1}\to\M{d_2}$ is said to be an \emph{entanglement breaking (EB-)map} if for every $k\geq 1$ and for every $X\in(\M{k}\otimes\M{d_1})^+$ the matrix $(\id_k\otimes\Phi)(X)\in(\M{k}\otimes\M{d_2})^+$ is separable. 
 
\begin{thm}[\cite{Hol98, HSR03}]\label{thm-EBmap-char}
 Given a CP-map $\Phi: \M{d_1}\to \M{d_2}$ the following conditions are equivalent:  \begin{enumerate}[label=(\roman*)]
    \item $\Phi$ is an EB-map.
    \item $C_\Phi\in(\M{d_1}\otimes\M{d_2})^+$ is separable.
    \item There exist rank-one operators $V_i\in\M{d_1\times d_2},1\leq i\leq n$, such that $\Phi=\sum_{i=1}^n\mathrm{Ad}_{V_i}$. 
    \item (Holevo form:) There exist $F_i\in\M{d_1}^+$ and $R_i\in\M{d_2}^+$ such that $\Phi(X)=\sum_{i=1}^m\tr(XF_i)R_i$ for all $X\in\M{d_1}$. (Here '$\tr$' denotes the trace map.)
 \end{enumerate} 
\end{thm}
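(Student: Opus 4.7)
The plan is to establish the cyclic chain (i) $\Rightarrow$ (ii) $\Rightarrow$ (iii) $\Rightarrow$ (iv) $\Rightarrow$ (i). The two key ingredients are the Choi map (a linear isomorphism between CP-maps $\M{d_1}\to\M{d_2}$ and positive elements of $\M{d_1}\otimes\M{d_2}$) and the fact that every positive linear functional $\tau:\M{d_1}\to\mbb{C}$ is automatically completely positive, so that $\id_k\otimes\tau$ sends $(\M{k}\otimes\M{d_1})^+$ into $\M{k}^+$ for every $k\geq 1$.

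The implication (i) $\Rightarrow$ (ii) follows by applying the EB condition to the rank-one positive matrix $\Omega:=\sum_{i,j=1}^{d_1}E_{ij}\otimes E_{ij}=\ranko{\omega}{\omega}\in(\M{d_1}\otimes\M{d_1})^+$, where $\omega:=\sum_i e_i\otimes e_i$: its image under $\id_{d_1}\otimes\Phi$ is precisely $C_\Phi$, which must therefore be separable. For (ii) $\Rightarrow$ (iii), one decomposes $C_\Phi=\sum_{i=1}^n\ranko{u_i}{u_i}\otimes\ranko{y_i}{y_i}$ into rank-one positive tensors and inverts the Choi map termwise; a direct computation gives the identity $C_{\mathrm{Ad}_V}=\ranko{\bar x}{\bar x}\otimes\ranko{y}{y}$ for $V=\ranko{x}{y}\in\M{d_1\times d_2}$ (with $\bar x$ the coordinatewise conjugate of $x$ in the standard basis), so the rank-one operators $V_i:=\ranko{\bar u_i}{y_i}$ provide a rank-one Kraus decomposition $\Phi=\sum_{i=1}^n\mathrm{Ad}_{V_i}$.

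For (iii) $\Rightarrow$ (iv), writing $V_i=\ranko{x_i}{y_i}$ yields $\mathrm{Ad}_{V_i}(X)=\ip{x_i,Xx_i}\ranko{y_i}{y_i}=\tr(X\ranko{x_i}{x_i})\ranko{y_i}{y_i}$, so the Holevo form holds with $F_i:=\ranko{x_i}{x_i}\in\M{d_1}^+$ and $R_i:=\ranko{y_i}{y_i}\in\M{d_2}^+$. Finally, for (iv) $\Rightarrow$ (i), fix $k\geq 1$ and $Y\in(\M{k}\otimes\M{d_1})^+$; expanding $Y=\sum_{p,q}E_{pq}\otimes Y_{pq}$ in the first factor gives
\begin{align*}
(\id_k\otimes\Phi)(Y)=\sum_{i=1}^m(\id_k\otimes\tau_i)(Y)\otimes R_i,
\end{align*}
where $\tau_i:\M{d_1}\to\mbb{C}$ is the positive functional $X\mapsto\tr(XF_i)$. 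Each $(\id_k\otimes\tau_i)(Y)$ lies in $\M{k}^+$ by the observation above, exhibiting $(\id_k\otimes\Phi)(Y)$ as a separable element of $(\M{k}\otimes\M{d_2})^+$. I do not anticipate any serious obstacle; the only bookkeeping subtlety is the complex conjugation appearing in (ii) $\Rightarrow$ (iii), a basis-dependent artefact of the Choi isomorphism that is transparent once spelled out.
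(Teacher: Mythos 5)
The paper does not prove Theorem \ref{thm-EBmap-char} but cites it from \cite{Hol98, HSR03}; your cyclic argument (i)$\Rightarrow$(ii)$\Rightarrow$(iii)$\Rightarrow$(iv)$\Rightarrow$(i) is correct and is essentially the standard proof from those references: applying $\id_{d_1}\otimes\Phi$ to the maximally entangled projection to get separability of $C_\Phi$, inverting the Choi map on rank-one tensors, and using that positive functionals are completely positive for the return implication. Your identity $C_{\mathrm{Ad}_V}=\ranko{\bar x}{\bar x}\otimes\ranko{y}{y}$ for $V=\ranko{x}{y}$ is the right one under the paper's convention $\mathrm{Ad}_V(X)=V^*XV$, so there are no gaps.
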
 
 
 \noindent Note that EB-maps are necessarily PPT-maps. The converse is true when $d_1d_2\leq 6$, but not true in general (\cite{HHH96,Hor97}).  
 
 We will denote the convex set of all unital EB-maps from $\M{d_1}$ into $\M{d_2}$ by  $\mathrm{UEB}(d_1,d_2)$. Given $\Phi\in\mathrm{UEB}(d_1,d_2)$ its \emph{entanglement breaking rank}, which we denote by EB-rank$(\Phi)$, is defined (\cite{PPPR20}) as the minimum number of rank-one Kraus operators required to represent $\Phi$ as in Theorem \ref{thm-EBmap-char} (iii). It is known (\cite{Hor97, HSR03}) that 
 $$d_2\leq\mbox{Choi-rank}(\Phi)\leq\mbox{EB-rank}(\Phi)\leq(d_1d_2)^2.$$ 
 Note that Choi-rank$(\Phi)=$rank$(C_\Phi)$ and EB-rank$(\Phi)=\ell(C_\Phi)$.

\section{Basic properties of \texorpdfstring{$C^*$}{C*}-extreme points}\label{sec-basic-pro}

 Following \cite{FaMo97} we define $C^*$-extreme points of $\mathrm{UEB}(d_1,d_2)$ as follows. 

\begin{defn}
 A linear map $\Phi\in\mathrm{UEB}(d_1,d_2)$ is said to be a
 \begin{enumerate}[label=(\roman*)]
     \item \emph{linear-extreme point} of $\mathrm{UEB}(d_1,d_2)$  if, whenever $\Phi=\sum_{i=1}^n t_i\Phi_i$, where  $\Phi_i\in\mathrm{UEB}(d_1,d_2)$ and $t_i\in (0,1)$ with $\sum_{i=1}^n t_i=1$, then $\Phi_i=\Phi$ for all $1\leq i\leq n$;
     \item \emph{$C^*$-extreme point} of $\mathrm{UEB}(d_1,d_2)$ if, whenever $\Phi$ is written as a proper $C^*$-convex combination, say 
     \begin{align*}
          \Phi=\sum_{i=1}^n \mathrm{Ad}_{T_i}\circ\Phi_i,
     \end{align*}
     where $T_i\in\M{d_2}$ are invertible with $\sum_{i=1}^nT_i^*T_i=I_{d_2}$ and $\Phi_i\in\mathrm{UEB}(d_1,d_2)$, then  there exist unitaries $U_i\in\M{d_2}$ such that $\Phi_i=\mathrm{Ad}_{U_i}\circ\Phi$ for all $1\leq i\leq n$.
 \end{enumerate}
\end{defn}

 Observe that $\mathrm{UEB}(d_1,d_2)$ is a   $C^*$-convex set in the sense it is closed under $C^*$-convex combinations. We denote the set of  linear-extreme points and the set of $C^*$-extreme points of $\mathrm{UEB}(d_1,d_2)$ by $\mathrm{UEB}_{ext}(d_1,d_2)$ and $\mathrm{UEB}_{C^*-ext}(d_1,d_2)$, respectively. Observe that if $\Phi,\Psi\in\mathrm{UEB}(d_1,d_2)$ are unitarily equivalent (and we write $\Phi\cong\Psi$) and if $\Phi$ is a $C^*$-extreme point, then $\Psi$ is so.

 The following two propositions are analogue of \cite[Proposition 2.1.2]{Zho98} and \cite[Proposition 1.1]{FaMo97}, respectively, in the context of unital EB-maps. Though proofs are similar to that for unital CP-maps we add them here for the sake of completeness.

\begin{prop}\label{prop-C-extr-abs-char}
 Given $\Phi\in\mathrm{UEB}(d_1,d_2)$ the following conditions are equivalent: 
 \begin{enumerate}[label=(\roman*)]
     \item $\Phi$ is a $C^*$-extreme point of $\mathrm{UEB}(d_1,d_2)$.
     \item If $\Phi=\sum_{i=1}^{2}\mathrm{Ad}_{T_i}\circ\Phi_{i}$, where  $\Phi_i\in \mathrm{UEB}(d_1,d_2)$ and  $T_i\in\M{d_2}$ are invertible with $\sum_{i=1}^{2}T_i^*T_i=I_{d_2}$, then $\Phi_i$'s are unitarily equivalent to $\Phi$.
 \end{enumerate}
\end{prop}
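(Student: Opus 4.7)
The implication (i) $\Rightarrow$ (ii) is immediate, being the special case $n=2$ of the definition of a $C^*$-extreme point. The real content is (ii) $\Rightarrow$ (i), and I would prove it by induction on $n$, the number of terms in a proper $C^*$-convex combination.

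For the inductive step, suppose $\Phi=\sum_{i=1}^n \mathrm{Ad}_{T_i}\circ\Phi_i$ with $n\geq 3$, each $T_i\in\M{d_2}$ invertible, $\sum_{i=1}^nT_i^*T_i=I_{d_2}$, and each $\Phi_i\in\mathrm{UEB}(d_1,d_2)$. The key step is to regroup the last $n-1$ terms into a single $C^*$-convex component. Set $S:=\sum_{i=2}^n T_i^*T_i=I_{d_2}-T_1^*T_1$. Since $T_2$ is invertible we have $T_2^*T_2>0$, and since $T_2^*T_2\leq S$ it follows that $S$ is strictly positive, hence $S^{1/2}$ is invertible. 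Define $R_i:=T_iS^{-1/2}$ for $2\leq i\leq n$. A direct computation gives $\sum_{i=2}^nR_i^*R_i=I_{d_2}$, so the map
\[
\Psi:=\sum_{i=2}^n\mathrm{Ad}_{R_i}\circ\Phi_i
\]
is a $C^*$-convex combination, hence lies in $\mathrm{UEB}(d_1,d_2)$, and one verifies $\Psi(I_{d_1})=I_{d_2}$. Then
\[
\Phi=\mathrm{Ad}_{T_1}\circ\Phi_1+\mathrm{Ad}_{S^{1/2}}\circ\Psi
\]
is a proper $C^*$-convex combination of two terms. By hypothesis (ii), there exist unitaries making $\Phi_1\cong\Phi$ and $\Psi\cong\Phi$.

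To close the induction I need to apply the inductive hypothesis to the decomposition $\Psi=\sum_{i=2}^n\mathrm{Ad}_{R_i}\circ\Phi_i$ (which has $n-1$ terms), so I first check that property (ii) transfers along unitary equivalence. If $\Psi=\mathrm{Ad}_U\circ\Phi$ for a unitary $U$ and $\Psi=\mathrm{Ad}_{A_1}\circ\Xi_1+\mathrm{Ad}_{A_2}\circ\Xi_2$ is a proper two-term $C^*$-convex combination with $\Xi_i\in\mathrm{UEB}(d_1,d_2)$, then substituting yields $\Phi=\mathrm{Ad}_{A_1U^*}\circ\Xi_1+\mathrm{Ad}_{A_2U^*}\circ\Xi_2$; a quick check confirms $(A_1U^*)^*(A_1U^*)+(A_2U^*)^*(A_2U^*)=I_{d_2}$ and each $A_iU^*$ is invertible, so applying (ii) to $\Phi$ gives $\Xi_i\cong\Phi\cong\Psi$. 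Thus $\Psi$ inherits property (ii), and the inductive hypothesis applied to $\Psi$ yields $\Phi_i\cong\Psi\cong\Phi$ for $2\leq i\leq n$. Combined with $\Phi_1\cong\Phi$, this completes the induction.

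The only mildly subtle point is the strict positivity of $S=I_{d_2}-T_1^*T_1$, which is exactly what makes the regrouping an honest proper $C^*$-convex combination and allows the induction to proceed; everything else is routine bookkeeping. This argument is essentially identical to the one for unital CP-maps, and only uses that $\mathrm{UEB}(d_1,d_2)$ is closed under $C^*$-convex combinations.
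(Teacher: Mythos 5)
Your proof is correct and follows essentially the same route as the paper's: induction on the number of terms, regrouping all but one of them into a single properly weighted $C^*$-convex component via the invertible square root of the corresponding sum $\sum T_i^*T_i$, and then applying the two-term hypothesis together with the inductive hypothesis. The only (minor, and welcome) difference is that you make explicit the step the paper leaves implicit, namely that property (ii) transfers along unitary equivalence so that the inductive hypothesis may legitimately be applied to $\Psi$.
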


\begin{proof}
 We only prove the nontrivial part $(ii)\Rightarrow(i)$. To show that whenever $\Phi=\sum_{i=1}^{n}\mathrm{Ad}_{T_i}\circ\Phi_{i}$ is a proper $C^*$-convex combination with $\Phi_i\in\mathrm{UEB}(d_1,d_2)$, then $\Phi_i\cong\Phi$ for all $1\leq i\leq n$.  We prove by induction on $n$. By assumption the result is true for  $n=2$. Assume that the result is true for $m\geq 2$. Now, suppose
 \begin{align*}
     \Phi=\sum_{i=1}^{m+1}\mathrm{Ad}_{T_i}\circ\Phi_{i}
         =\sum_{i=1}^m \mathrm{Ad}_{T_i}\circ\Phi_i+\mathrm{Ad}_{T_{m+1}}\circ\Phi_{m+1},
 \end{align*}
 where $T_i\in\M{d_2}$ invertible with  $\sum_{i=1}^{m+1}T_i^*T_i=I_{d_2}$ and $\Phi_i\in \mathrm{UEB}(d_1,d_2)$. Let $S$ be the positive square root of $\sum_{i=1}^mT_i^*T_i$. Then $S$ is invertible and $S^*S+T_{m+1}^*T_{m+1}=I_{d_2}$. Let $\Psi=\sum_{i=1}^m \mathrm{Ad}_{T_iS^{-1}}\circ \Phi_i$; then $\Psi\in\mathrm{UEB}(d_1,d_2)$ is such that 
 \begin{align*}
     \Phi=\mathrm{Ad}_S\circ\Psi+\mathrm{Ad}_{T_{m+1}}\circ\Phi_{m+1}.
 \end{align*}
 Hence, by assumption $(ii)$, $\Psi\cong\Phi\cong\Phi_{m+1}$. Now, as $\Psi$ is a proper $C^*$-convex combination, by induction hypothesis, we conclude that $\Phi_i\cong\Phi$ for all $1\leq i\leq m$. This completes the proof. 
\end{proof}

\begin{note}
 We can easily verify that, like the above, it is enough to consider a convex combination of two maps in the definition of linear extreme points.    
\end{note}

 The following result (\cite[Theorem 3.5(a)]{AGG02}) on fixed points of CP-maps seems to be well-known. 

\begin{lem}
 Let $\Phi: \M{d}\to\M{d}$ be a unital, trace-preserving CP-map given by 
 $$\Phi= \sum _{i=1}^m\mathrm{Ad}_{T_i}$$
 for some $T_1, T_2, \ldots , T_m$ in $\M{d}, m\in {\mathbb N}$. Then for $A\in\M{d}$, $\Phi(A)=A$ if and only if $AT_i=T_iA$ for all $1\leq i\leq m$.
\end{lem}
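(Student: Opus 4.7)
My plan is to prove the two directions separately. The \emph{if} direction is immediate: if $AT_i = T_iA$ for every $i$, then
\[
\Phi(A) = \sum_{i=1}^m T_i^* A T_i = \Bigl(\sum_{i=1}^m T_i^* T_i\Bigr) A = A,
\]
since unitality of $\Phi$ gives $\sum_{i=1}^m T_i^* T_i = I_d$.

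For the \emph{only if} direction, I would invoke the Kadison--Schwarz inequality: since $\Phi$ is a unital CP-map, $\Phi(A^*A) \geq \Phi(A)^*\Phi(A)$ for every $A \in \M{d}$. Assuming $\Phi(A) = A$ and using that $\Phi$ is adjoint-preserving (so $\Phi(A^*) = A^*$), together with the trace-preserving hypothesis, one gets
\[
\tr\bigl(\Phi(A^*A) - \Phi(A)^*\Phi(A)\bigr) = \tr(A^*A) - \tr(A^*A) = 0.
\]
Since the matrix inside the trace is positive semidefinite, it must vanish, yielding equality in Kadison--Schwarz: $\Phi(A^*A) = \Phi(A)^*\Phi(A)$.

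The remaining task is to extract the commutation relations from this equality. For this I would package the Kraus data as a Stinespring isometry: let $V : \mbb{C}^d \to \mbb{C}^d \otimes \mbb{C}^m$ be defined by $V\xi = \sum_{i=1}^m (T_i\xi) \otimes e_i$. Unitality of $\Phi$ forces $V^*V = I_d$, and a short direct computation gives $\Phi(X) = V^*(X \otimes I_m)V$ for every $X \in \M{d}$. Rewriting the Kadison--Schwarz equality in this language leads to
\[
V^*(A^* \otimes I_m)(I - VV^*)(A \otimes I_m)V = 0,
\]
which forces $(I - VV^*)(A \otimes I_m)V = 0$, i.e.\ $(A \otimes I_m)V = VV^*(A \otimes I_m)V = V\Phi(A) = VA$. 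Comparing the $i$-th tensor components of $(A \otimes I_m)V\xi$ and $VA\xi$ then yields $AT_i = T_iA$ for each $1 \leq i \leq m$.

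The main obstacle is precisely this last translation step: one has to convert the operator equality $\Phi(A^*A) = \Phi(A)^*\Phi(A)$ into a per-Kraus-operator commutation condition, which is why bundling the Kraus decomposition into the single isometry $V$ is so useful. The rest of the argument relies only on two standard ingredients---the Kadison--Schwarz inequality for unital CP-maps and the observation that a positive semidefinite matrix with vanishing trace must be zero.
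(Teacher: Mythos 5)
Your proof is correct. Note, however, that the paper does not actually prove this lemma: it is stated as a known result and attributed to \cite[Theorem 3.5(a)]{AGG02}, so there is no in-paper argument to compare against. Your route --- Kadison--Schwarz plus trace preservation to force the equality $\Phi(A^*A)=\Phi(A)^*\Phi(A)$, then the Stinespring isometry $V\xi=\sum_i (T_i\xi)\otimes e_i$ to convert that equality into $(A\otimes I_m)V=VA$ and hence $AT_i=T_iA$ --- is sound; every step checks out, including the factorization $\Phi(A^*A)-\Phi(A)^*\Phi(A)=\bigl[(I-VV^*)(A\otimes I_m)V\bigr]^*\bigl[(I-VV^*)(A\otimes I_m)V\bigr]$, which legitimately upgrades vanishing of the product to vanishing of the factor. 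For comparison, the same conclusion can be reached more elementarily without Stinespring: expanding
\[
\sum_{i=1}^m (T_iA-AT_i)^*(T_iA-AT_i)=\Phi(A^*A)-A^*\Phi(A)-\Phi(A^*)A+A^*A=\Phi(A^*A)-A^*A
\]
(using unitality, $\Phi(A)=A$ and $\Phi(A^*)=A^*$), then taking traces and invoking trace preservation shows the left-hand side is a sum of positive semidefinite matrices with zero trace, so each summand vanishes and $T_iA=AT_i$ directly. This is the standard argument in the cited reference; your version buys a cleaner conceptual picture (equality in Schwarz, multiplicative domain) at the cost of slightly more machinery, and both are equally valid.
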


\begin{prop}\label{prop-Cext-Lext}
 $\mathrm{UEB}_{C^*-ext}(d_1,d_2)\subseteq\mathrm{UEB}_{ext}(d_1,d_2)$.
\end{prop}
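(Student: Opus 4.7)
The plan is to mimic the classical argument from the unital CP setting, using the fixed-point lemma just stated. Suppose $\Phi\in\mathrm{UEB}(d_1,d_2)$ is a $C^*$-extreme point, and that
\begin{align*}
 \Phi = t\Phi_1 + (1-t)\Phi_2
\end{align*}
for some $t\in(0,1)$ and $\Phi_1,\Phi_2\in\mathrm{UEB}(d_1,d_2)$. By the note following Proposition \ref{prop-C-extr-abs-char}, it suffices to show $\Phi_1=\Phi_2=\Phi$. The first step is simply to observe that this ordinary convex combination can be viewed as a proper $C^*$-convex combination, namely
\begin{align*}
 \Phi = \mathrm{Ad}_{\sqrt{t}\,I_{d_2}}\circ\Phi_1 + \mathrm{Ad}_{\sqrt{1-t}\,I_{d_2}}\circ\Phi_2,
\end{align*}
with the scalar multiples of the identity being invertible and satisfying $t I_{d_2} + (1-t) I_{d_2} = I_{d_2}$.

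Next, since $\Phi$ is $C^*$-extreme, there exist unitaries $U_1,U_2\in\M{d_2}$ such that $\Phi_i=\mathrm{Ad}_{U_i}\circ\Phi$ for $i=1,2$. Substituting this back into the decomposition yields
\begin{align*}
 \Phi(X) = t\,U_1^*\Phi(X)U_1 + (1-t)\,U_2^*\Phi(X)U_2 \qquad\text{for all } X\in\M{d_1}.
\end{align*}
In other words, defining $\Psi:=\mathrm{Ad}_{\sqrt{t}\,U_1}+\mathrm{Ad}_{\sqrt{1-t}\,U_2}$ on $\M{d_2}$, every element of $\ran{\Phi}$ is a fixed point of $\Psi$. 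A direct check shows that $\Psi$ is both unital and trace-preserving (using the unitarity of $U_1,U_2$ and $t+(1-t)=1$), so the fixed-point lemma applies.

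The lemma then forces $\Phi(X)(\sqrt{t}\,U_i)=(\sqrt{t}\,U_i)\Phi(X)$ and analogously for $U_2$; that is, $\Phi(X)$ commutes with each $U_i$ for every $X\in\M{d_1}$. Hence $\Phi_i(X)=U_i^*\Phi(X)U_i=\Phi(X)$, so $\Phi_1=\Phi_2=\Phi$, completing the argument. I do not expect a genuine obstacle here: the only conceptual step is the trick of writing a scalar convex combination as a $C^*$-convex combination via $\sqrt{t}\,I_{d_2}$ and $\sqrt{1-t}\,I_{d_2}$, after which $C^*$-extremality and the fixed-point lemma do the rest. One should only verify that the set $\mathrm{UEB}(d_1,d_2)$ actually lies inside $\mathrm{UCP}(d_1,d_2)$ (which is immediate) so that the construction stays within the class, and that the hypotheses of the fixed-point lemma are met by $\Psi$, which is straightforward.
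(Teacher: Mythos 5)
Your proposal is correct and follows essentially the same route as the paper: rewrite the scalar convex combination as a proper $C^*$-convex combination via $\sqrt{t}\,I_{d_2}$ and $\sqrt{1-t}\,I_{d_2}$, invoke $C^*$-extremality to get $\Phi_i=\mathrm{Ad}_{U_i}\circ\Phi$, and then apply the fixed-point lemma to the unital trace-preserving map $\mathrm{Ad}_{\sqrt{t}\,U_1}+\mathrm{Ad}_{\sqrt{1-t}\,U_2}$ to conclude that the $U_i$ commute with $\ran{\Phi}$. No gaps; this matches the paper's argument step for step.
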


\begin{proof}
 Let $\Phi\in\mathrm{UEB}_{C^*-ext}(d_1,d_2)$. Suppose $\Phi=\sum_{i=1}^2 t_i\Phi_i$, where $\Phi_i\in \mathrm{UEB}(d_1,d_2)$ and $t_i\in(0, 1)$ such that $\sum_{i=1}^2t_i=1$. Since $\Phi$ is a $C^*$-extreme point there exist unitaries $U_i\in\M{d_2}$ such that $\Phi_i=\mathrm{Ad}_{U_i}\circ\Phi$, and thus
 \begin{align}\label{eq-Phi}
   \Phi=\sum_{i=1}^2 t_i\mathrm{Ad}_{U_i}\circ\Phi
       =\sum_{i=1}^2 \mathrm{Ad}_{T_i}\circ\Phi,  
 \end{align} 
 where $T_i=\sqrt{t_i}U_i, 1\leq i\leq 2$. Consider the unital trace-preserving  CP-map $\Psi=\sum_{i=1}^2\mathrm{Ad}_{T_i}$ on $\M{d_2}$. Let $T\in\Phi(\M{d_1})$. Note that $\Psi(T)=T$ and hence, from the above Lemma, it follows that $T_i$ commutes with $T$. But $T_i$ is scalar multiple of $U_i$ so that $U_i$ commutes with $T$. Since $T$ is arbitrary it follows that $U_i$'s commutes with range of $\Phi$. Hence 
 $$\Phi_i(X)=\mathrm{Ad}_{U_i}\circ\Phi(X)=U_i^*\Phi(X)U_i=\Phi(X)$$
 for all $X\in\M{d_1}, 1\leq i\leq 2$, and   concludes that $\Phi\in\mathrm{UEB}_{ext}(d_1,d_2)$. 
\end{proof}

\section{Radon Nikodym type theorem for EB-maps}\label{sec-Rad-Nik}

 The well-known Radon-Nikodym theorem tells us that a measure absolutely continuous with respect to a given measure can be recovered using a positive function called the Radon-Nikodym derivative.  W. Arveson showed that  CP-maps dominated by a given CP-map can be described through positive contractions in the commutant of the Stinespring representation. In  particular we get the following: Suppose $\Phi=\sum_{i=1}^n\mathrm{Ad}_{V_i}\in\mathrm{UCP}(d_1,d_2)$. If $\Psi:\M{d_1}\to\M{d_2}$ is a CP-map such that $\Psi\leq_{CP}\Phi$, then due to \cite{Arv69a} there exists a positive contraction $T=[t_{ij}]\in\M{n}$ such that $\Psi(X)=\sum_{i,j=1}^nt_{ij}V_i^*XV_j$.  Further, writing $T=S^*S$ for some $S=[s_{ij}]\in\M{n}$ we get $t_{ij}=\sum_{k=1}^n\ol{s_{ki}}s_{kj}$ for all $1\leq i,j\leq n$. Thus, $\Psi=\sum_{k=1}^n\mathrm{Ad}_{L_k}$,  where $L_k=\sum_{j=1}^n s_{kj}V_j$ for every $1\leq k\leq n$.  
 
\begin{notn}
 Given $\ul{x}=\{x_i: 1\leq i\leq n\}\subseteq\mbb{C}^{d_1}$ and $\ul{y}=\{y_i: 1\leq i\leq n\}\subseteq\mbb{C}^{d_2}$ we define the EB-map $\mcl{E}_{\ul{x},\ul{y}}:\M{d_1}\to\M{d_2}$ by 
  \begin{align*}
        \mcl{E}_{\ul{x},\ul{y}}(X):=\sum_i\mcl{E}_{x_i,y_i},
  \end{align*}
  where $\mcl{E}_{u,v}(X):=\ip{u,Xu}\ranko{v}{v}$  for all $u\in\mbb{C}^{d_1}, v\in\mbb{C}^{d_2}$ and  $X\in\M{d_1}$.
\end{notn}

\begin{lem}\label{lem-E-Phi-dominated}
 Let $\Phi\in\mathrm{UEB}(d_1,d_2)$ be such that 
 $$\Phi(X)=\sum_{i=1}^n\ip{u_i,Xu_i}P_i$$
 for all $X\in\M{d_1}$, where $u_i\in\mbb{C}^{d_1}$ are unit vectors  such that $\{u_i,u_j\}$ is linearly independent whenever $i\neq j$, and $P_i\in\M{d_2}$ are mutually orthogonal projections such that $\sum_{i=1}^nP_i=I_{d_2}$. If $x\in\mbb{C}^{d_1}$ and $y\in\mbb{C}^{d_2}$ are non-zero vectors such that $$\mcl{E}_{x,y}\leq_{CP}\Phi,$$ then $x\in\mbb{C}u_j$ and $y\in\ran{P_j}$ for some $1\leq j\leq n$. In particular, $$\mcl{E}_{x,y}(X)=\ip{u_j,Xu_j}R_j$$ for all $X\in\M{d_1}$ and for some positive contraction $R_j\in\M{d_2}$ with $R_jP_k=P_kR_j=\delta_{jk}R_j$ for all $1\leq k\leq n$. 
\end{lem}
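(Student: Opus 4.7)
The natural approach is to invoke Arveson's Radon--Nikodym theorem, exactly as recalled at the start of Section \ref{sec-Rad-Nik}. First I would diagonalize each $P_i$, choosing an orthonormal basis $\{f_{ik}\}_k$ of $\ran{P_i}$; then the collection $\{f_{ik}\}_{i,k}$ is an orthonormal basis of $\mbb{C}^{d_2}$ and
\begin{align*}
   \Phi=\sum_{i,k}\mathrm{Ad}_{V_{ik}},\qquad V_{ik}:=\ranko{u_i}{f_{ik}}\in\M{d_1\times d_2}.
\end{align*}
A direct calculation gives $V_{ik}^*XV_{jl}=\ip{u_i,Xu_j}\ranko{f_{ik}}{f_{jl}}$ for every $X\in\M{d_1}$.

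Since $\mcl{E}_{x,y}\leq_{CP}\Phi$, Arveson's theorem supplies a positive contraction $T=[t_{(i,k),(j,l)}]$ with
\begin{align*}
   \mcl{E}_{x,y}(X)=\sum_{(i,k),(j,l)}t_{(i,k),(j,l)}\ip{u_i,Xu_j}\ranko{f_{ik}}{f_{jl}}.
\end{align*}
Writing $y=\sum_{i,k}y_{ik}f_{ik}$ with $y_{ik}=\ip{f_{ik},y}$ and using that $\{\ranko{f_{ik}}{f_{jl}}\}$ is a linear basis of $\M{d_2}$, the defining identity $\mcl{E}_{x,y}(X)=\ip{x,Xx}\ranko{y}{y}$ yields the scalar relation
\begin{align*}
   \ip{x,Xx}\,y_{ik}\,\ol{y_{jl}}=t_{(i,k),(j,l)}\,\ip{u_i,Xu_j}
\end{align*}
for every $X\in\M{d_1}$ and every quadruple $(i,k),(j,l)$.

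The key step is to extract a rank-one proportionality from this relation. Specializing to $X=\ranko{v}{w}$ turns each side into a product of linear functionals in $v$ and $w$, so whenever $y_{ik}\ol{y_{jl}}\neq 0$ the rank-one operator $\ranko{x}{x}$ must be a scalar multiple of $\ranko{u_i}{u_j}$. By the linear independence hypothesis on the pair $\{u_i,u_j\}$ for $i\neq j$, this is possible only when $i=j$; moreover it then forces $x\in\mbb{C}u_j$. Consequently $y_{ik}=0$ for all $i\neq j$, i.e.\ $y\in\ran{P_j}$, which establishes the first assertion.

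For the ``in particular'' statement, write $x=\alpha u_j$; then
\begin{align*}
    \mcl{E}_{x,y}(X)=|\alpha|^2\ip{u_j,Xu_j}\ranko{y}{y},
\end{align*}
so one sets $R_j:=|\alpha|^2\ranko{y}{y}$. The estimate $\mcl{E}_{x,y}(I)\leq\Phi(I)=I_{d_2}$ gives $|\alpha|^2\|y\|^2\leq 1$, hence $R_j$ is a positive contraction; the identities $R_jP_k=P_kR_j=\delta_{jk}R_j$ are immediate from $y\in\ran{P_j}$ together with the orthogonality of the $P_k$'s. The main obstacle is the rank-one factorization step: one has to organize the bilinear identity so that the linear independence hypothesis is applied exactly once, simultaneously ruling out all cross-terms $i\neq j$ and pinning down $x$ up to a scalar on the diagonal.
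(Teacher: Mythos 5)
Your proof is correct and follows essentially the same route as the paper's: both rest on the Arveson Radon--Nikodym facts recalled at the start of Section \ref{sec-Rad-Nik} together with the pairwise linear independence of the $u_i$'s, and your ``in particular'' step matches the paper's verbatim. The paper's version is marginally more direct---it uses only the span containment $\ranko{x}{y}\in\lspan\{\ranko{u_k}{v_i^k}\}$ and evaluates that identity on the basis vectors $v_i^k$ to get $x\ip{y,v_i^k}=\lambda_{k,i}u_k$, rather than passing through the full coefficient matrix $T$ and comparing coefficients against the matrix units $\ranko{f_{ik}}{f_{jl}}$---but the substance is identical.
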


\begin{proof}
 For every $1\leq k\leq n$ let $P_k=\sum_{i=1}^{r_k}\ranko{v_i^k}{v_i^k}$, where $\{v_i^k: 1\leq i\leq r_k\}$ is an orthonormal basis for $\ran{P_k}$.  Then
 \begin{align*}
     \Phi(X)=\sum_{k=1}^n\sum_{i=1}^{r_k}\ranko{v_i^k}{u_k}X\ranko{u_k}{v_i^k}\qquad\forall~X\in\M{d_1},
 \end{align*}
 so that $\{\ranko{u_k}{v_i^k}: 1\leq i\leq r_k, 1\leq k\leq n\}$ is a set of Kraus operators for $\Phi$. As $\mcl{E}_{x,y}$ is a CP-map with Kraus operator $\ranko{x}{y}$ and $\mcl{E}_{x,y}\leq_{CP}\Phi$, we have 
 $$\ranko{x}{y}\in\lspan\{\ranko{u_k}{v_i^k}: 1\leq i\leq r_k, 1\leq k\leq n\}.$$
 Suppose $\lambda_{k,i}\in\mbb{C}$ are such that 
 \begin{align*}
      \ranko{x}{y}=\sum_{k=1}^n\sum_{i=1}^{r_k}\lambda_{k,i}\ranko{u_k}{v_i^k}.
 \end{align*}     
 Since $\{v_i^k: 1\leq i\leq r_k,1\leq k\leq n\}\subseteq\mbb{C}^{d_2}$ is an orthonormal basis and $0\neq y\in\mbb{C}^{d_2}$, applying the operator $|x\rangle \langle y|$ on these basis vectors we get $$x\langle y, v^k_i\rangle =\lambda _{k, i} u_k$$ for all $k,i.$  Therefore if $\ip{y, v^k_i}\neq 0$ then $\lambda _{k,i}\neq 0$ and $x\in {\mathbb C}u_k .$ Since $u_k$'s are pairwise linearly independent it follows that there is a unique $1\leq j\leq n$, such that $x\in\mbb{C}u_j$ and $\ip{v_i^k,y}=0$ for all $1\leq i\leq r_k$ and for all $k\neq j$. So $y=P_j(y)\in\ran{P_j}$ and 
 $$\mcl{E}_{x,y}(X)=\ip{u_j,Xu_j}R_j\qquad\forall~X\in\M{d_1},$$
 where $R_j=\ip{x,x}\ranko{y}{y}$.  Note that $R_j=\mcl{E}_{x,y}(I)\leq\Phi(I)=I$ and $R_jP_k=P_kR_j=\delta_{jk}R_j$ for all $1\leq k\leq n$.
\end{proof}

 Suppose $\Phi,\Psi:\M{d_1}\to\M{d_2}$ are two EB-maps. Then we write $\Psi\leq_{EB}\Phi$ whenever the difference $\Phi-\Psi$ is also an EB-map.
 
\begin{thm}[Radon-Nikodym type theorem]\label{thm-Rad-Nik}
 Let $\Phi\in\mathrm{UEB}(d_1,d_2)$ be such that 
 $$\Phi(X)=\sum_{i=1}^n\phi_i(X)P_i$$
 for all $X\in\M{d_1}$, where $\phi_i:\M{d_1}\to\mbb{C}$ are distinct pure states and $P_i\in\M{d_2}$ are mutually orthogonal projections such that $\sum_{i=1}^nP_i=I_{d_2}$. Given an EB-map $\Psi:\M{d_1}\to\M{d_2}$ the following conditions are equivalent:
  \begin{enumerate}[label=(\roman*)]
     \item $\Psi\leq_{EB}\Phi$.
     \item $\Psi\leq_{CP}\Phi$.
     \item There exist positive contractions $R_i\in\M{d_2}$ with $P_iR_j=R_jP_i=\delta_{ij}R_j, 1\leq i,j\leq n$ such that $$\Psi(X)=\sum_{i=1}^n\phi_i(X)R_i$$  for all $X\in\M{d_1}$.
     \item There exists a positive contraction $R\in\ran{\Phi}'\subseteq\M{d_2}$ such that 
          $$\Psi(X)=\mathrm{Ad}_{\sqrt{R}}\circ\Phi(X)=\Phi(X)R$$
               for all $X\in\M{d_1}$. 
  \end{enumerate}
\end{thm}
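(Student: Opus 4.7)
The plan is to prove the cycle $(i)\Rightarrow(ii)\Rightarrow(iii)\Rightarrow(iv)\Rightarrow(i)$. The first implication $(i)\Rightarrow(ii)$ is immediate, since every EB-map is CP.

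The main step is $(ii)\Rightarrow(iii)$, and the key input is Lemma \ref{lem-E-Phi-dominated}. Since $\Psi$ is EB, Theorem \ref{thm-EBmap-char}(iii) lets me decompose $\Psi=\sum_{j=1}^N \mcl{E}_{x_j,y_j}$ with nonzero vectors $x_j\in\mbb{C}^{d_1}$, $y_j\in\mbb{C}^{d_2}$. Each summand satisfies $\mcl{E}_{x_j,y_j}\leq_{CP}\Psi\leq_{CP}\Phi$. Writing $\phi_i(X)=\ip{u_i,Xu_i}$ for unit vectors $u_i\in\mbb{C}^{d_1}$, the distinctness of the pure states $\phi_i$ forces $\{u_i,u_j\}$ to be linearly independent whenever $i\neq j$, so the hypotheses of Lemma \ref{lem-E-Phi-dominated} are met. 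Applying it to each $j$ produces an index $k(j)$ and a positive $S_j\in\M{d_2}$ satisfying $S_jP_l=P_lS_j=\delta_{k(j),l}S_j$ and $\mcl{E}_{x_j,y_j}(X)=\phi_{k(j)}(X)S_j$. Regrouping by the value of $k(j)$ yields $\Psi(X)=\sum_{k=1}^n\phi_k(X)R_k$ with $R_k:=\sum_{j:\,k(j)=k}S_j\geq 0$, and these $R_k$'s inherit the same commutation relations with the $P_l$'s. Finally $\sum_kR_k=\Psi(I_{d_1})\leq\Phi(I_{d_1})=I_{d_2}$, and since the $R_k$'s live in the mutually orthogonal corners $P_k\M{d_2}P_k$, this forces $R_k\leq P_k\leq I_{d_2}$, so each $R_k$ is a positive contraction.

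For $(iii)\Rightarrow(iv)$, set $R:=\sum_{i=1}^nR_i$. The commutation relations give $RP_j=R_j=P_jR$, so $R$ commutes with each $P_j$ and hence with $\ran{\Phi}\subseteq\lspan\{P_1,\ldots,P_n\}$; thus $R\in\ran{\Phi}'$. The $R_i$'s are positive contractions sitting in orthogonal corners, so $0\leq R\leq I_{d_2}$, and a direct calculation gives $\Phi(X)R=\sum_i\phi_i(X)P_iR=\sum_i\phi_i(X)R_i=\Psi(X)$. For $(iv)\Rightarrow(i)$, put $S:=\sqrt{I_{d_2}-R}\in\ran{\Phi}'$ via continuous functional calculus (the commutant of the self-adjoint subspace $\ran{\Phi}$ is a $*$-subalgebra of $\M{d_2}$). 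Since $S$ commutes with $\Phi(X)$, one has $(\Phi-\Psi)(X)=\Phi(X)(I_{d_2}-R)=S\,\Phi(X)\,S=\mathrm{Ad}_S\circ\Phi(X)$; starting from any rank-one Kraus decomposition $\Phi=\sum_i\mathrm{Ad}_{V_i}$ supplied by Theorem \ref{thm-EBmap-char}(iii) then gives $\Phi-\Psi=\sum_i\mathrm{Ad}_{V_iS}$ with every $V_iS$ still rank-one, so $\Phi-\Psi$ is EB, closing the cycle.

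The principal obstacle is $(ii)\Rightarrow(iii)$: I must promote mere CP-domination into the rigid Holevo-type structure dictated by $\Phi$. Lemma \ref{lem-E-Phi-dominated} is the essential ingredient here, pinning each rank-one EB summand of $\Psi$ to a single $(u_{k(j)},\ran{P_{k(j)}})$ channel of the decomposition of $\Phi$; without it, one could at best invoke Arveson's Radon--Nikodym theorem and would then face the extra task of extracting the claimed block-diagonal form by hand.
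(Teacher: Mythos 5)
Your proposal is correct and follows essentially the same route as the paper: the cycle $(i)\Rightarrow(ii)\Rightarrow(iii)\Rightarrow(iv)\Rightarrow(i)$, with the rank-one decomposition of $\Psi$ fed into Lemma \ref{lem-E-Phi-dominated} and then regrouped for the key implication $(ii)\Rightarrow(iii)$. You supply a few details the paper leaves implicit (pairwise linear independence of the $u_i$ from distinctness of the $\phi_i$, the corner argument showing each $R_k$ is a contraction, and the rank-one Kraus argument for $(iv)\Rightarrow(i)$), but the argument is the same.
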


\begin{proof}
 $(i)\Rightarrow(ii)$ Clear.\\
 $(ii)\Rightarrow (iii)$ Choose unit vectors $u_i\in\mbb{C}^{d_1}$ such that $\phi_i(X)=\ip{u_i,Xu_i}$ for all $X\in\M{d_1}$. Due to Holevo form $\Psi=\sum_{i=1}^m\mcl{E}_{x_i,y_i}$ for some non-zero vectors  $x_i\in\mbb{C}^{d_1}$ and $y_i\in\mbb{C}^{d_2}$. Since $\mcl{E}_{x_i,y_i}\leq_{CP}\Phi$ for all $1\leq i\leq m$, from Lemma \ref{lem-E-Phi-dominated}, there exist $1\leq j_i\leq n$ and a positive contraction $\widetilde{R}_{i}\in\M{d_2}$ with $\widetilde{R}_{i}P_k=P_k\widetilde{R}_{i}=\delta_{k,j_i}\widetilde{R}_{i}$ such that  $\mcl{E}_{x_i,y_i}(X)=\ip{u_{j_i},Xu_{j_i}}\widetilde{R}_{i}$ for all $X\in\M{d_1}$. Hence
  \begin{align*}
       \Psi(X)=\sum_{i=1}^m\phi_{j_i}(X)\widetilde{R}_{i}=\sum_{k=1}^n\phi_k(X)R_k\qquad\forall~X\in\M{d_1},
  \end{align*}
 with $R_k=\sum \widetilde{R}_{i}$, where the summation is taken over over all $i$ for which $\phi_{j_i}=\phi_k$; if there is no such $j_i's$, then take $R_k=0$. Observe that $R_k$ has the required properties. \\
 $(iii)\Rightarrow (iv)$ Suppose $\Psi(X)=\sum_{i=1}^n\phi_i(X)R_i$ for some positive contractions $R_i\in\M{d_2}$ satisfying $P_iR_j=R_jP_i=\delta_{ij}R_j$ for all $1\leq i,j\leq n$. Then $R:=\sum_i R_i=\Psi(I)\in\M{d_2}$ is a positive contraction such that $R\in\ran{\Phi}'$ and $\Psi=\mathrm{Ad}_{\sqrt{R}}\circ\Phi=\Phi(\cdot)R$.\\
 $(iv)\Rightarrow (i)$ Follows since $\Phi-\Psi=Ad_{\sqrt{I-R}}\circ\Phi$ is EB. This completes the proof. 
\end{proof}

\begin{eg}
 Define $\Phi,\Psi:\M{2}\to\M{2}$ by $\Phi(X)=\phi(X)I$, where $\phi(X)=\frac{x_{11}+x_{22}}{2}$. Consider the map $\Psi:\M{2}\to\M{2}$ by
  \begin{align*}
      \Phi(X)=\Matrix{\frac{x_{11}+x_{22}}{2}&0\\0&\frac{x_{11}+x_{22}}{2}}
      \quad\mbox{and}\quad
      \Psi(X)=\Matrix{\frac{x_{11}+x_{22}}{4}&\frac{x_{12}}{4}\\\frac{x_{21}}{4}&\frac{x_{11}+x_{22}}{4}}
  \end{align*} 
 for all $X=[x_{ij}]\in\M{2}$. Being PPT-maps $\Phi,\Psi$ and $\Phi-\Psi$ are EB-maps.  Thus, $\Psi\leq_{EB}\Phi$. But statements $(iii)$ and $(iv)$ of the above theorem do not hold here. This does not contradict the theorem as $\phi$ here is not pure. \end{eg} 

\begin{eg}
 In general, $\Psi\leq_{CP}\Phi$ does not imply that $\Psi\leq_{EB}\Phi$. For example, consider the map $\Phi:\M{d}\to\M{d}$ given by
 \begin{align*}
     \Phi(X)=\frac{\tr(X)I+cX}{d+c}
 \end{align*} 
 where $c\in(0,1]$. By \cite[Corollary 7.5.5]{Sto13} we have $\Phi\in\mathrm{UEB}(d)$. Observe that the EB-map $\Psi(X)=\frac{tr(X)I}{d+c}$  is such that $\Phi-\Psi$ is CP but not EB.
\end{eg}

\section{Structure of \texorpdfstring{$C^*$}{C*}-extreme points}\label{sec-Cext-strctr}

 In this section, we give a complete description of the structure of $C^*$-extreme points of $\mathrm{UEB}(d_1,d_2)$. To this end, first, we prove an abstract characterization of $C^*$-extreme points using standard techniques.

\begin{thm}\label{thm-C-extrm-char}
 Given $\Phi\in\mathrm{UEB}(d_1,d_2)$ the following conditions are equivalent:
 \begin{enumerate}[label=(\roman*)]
     \item $\Phi\in\mathrm{UEB}_{C^*-ext}(d_1,d_2)$.
     \item If $\Psi:\M{d_1}\to\M{d_2}$ is any EB-map with $\Psi\leq_{EB}\Phi$  and $\Psi(I) $ is invertible, then there exists an invertible matrix $Z\in\M{d_2}$ such that $\Psi=\mathrm{Ad}_{Z}\circ\Phi$.
 \end{enumerate}
\end{thm}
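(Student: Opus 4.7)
The plan is to establish both implications by reducing, via Proposition \ref{prop-C-extr-abs-char}, to two-term $C^*$-convex combinations, and then to exploit the standard splitting trick that translates between a domination $\Psi\leq_{EB}\Phi$ and a two-term proper $C^*$-convex decomposition of $\Phi$.

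For $(i)\Rightarrow(ii)$: given $\Psi\leq_{EB}\Phi$ with $R:=\Psi(I)$ invertible, I would first reduce to the sub-case where both $R$ and $I-R$ are invertible by replacing $\Psi$ with $\Psi_\epsilon:=(1-\epsilon)\Psi$ for small $\epsilon>0$; then $\Psi_\epsilon(I)=(1-\epsilon)R$ and $(\Phi-\Psi_\epsilon)(I)=\epsilon R+(I-R)$ are both strictly positive and hence invertible, and a $Z$ for $\Psi$ can be recovered from the corresponding $Z'$ for $\Psi_\epsilon$ by the rescaling $Z:=Z'/\sqrt{1-\epsilon}$. In the reduced sub-case, set $T_1:=R^{1/2}$ and $T_2:=(I-R)^{1/2}$, both invertible positive with $T_1^*T_1+T_2^*T_2=I$, and define
\begin{align*}
 \Phi_1 := \mathrm{Ad}_{T_1^{-1}}\circ \Psi, \qquad \Phi_2 := \mathrm{Ad}_{T_2^{-1}}\circ (\Phi-\Psi).
\end{align*}
Each $\Phi_i$ is unital by construction and remains EB because a rank-one Kraus decomposition $\Psi=\sum_k\mathrm{Ad}_{V_k}$ furnished by Theorem \ref{thm-EBmap-char}(iii) transforms into $\mathrm{Ad}_{T_i^{-1}}\circ\Psi=\sum_k\mathrm{Ad}_{V_k T_i^{-1}}$, whose Kraus operators are still rank-one since $T_i^{-1}$ is invertible. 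Hence $\Phi=\mathrm{Ad}_{T_1}\circ\Phi_1+\mathrm{Ad}_{T_2}\circ\Phi_2$ is a proper $C^*$-convex combination, and $C^*$-extremality gives a unitary $U\in\M{d_2}$ with $\Phi_1=\mathrm{Ad}_U\circ\Phi$; thus $\Psi=\mathrm{Ad}_{T_1}\circ\Phi_1=\mathrm{Ad}_{UT_1}\circ\Phi$ and $Z:=UT_1$ is invertible.

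For $(ii)\Rightarrow(i)$: by Proposition \ref{prop-C-extr-abs-char} it suffices to treat a proper two-term $C^*$-convex combination $\Phi=\mathrm{Ad}_{T_1}\circ\Phi_1+\mathrm{Ad}_{T_2}\circ\Phi_2$ with $\Phi_i\in\mathrm{UEB}(d_1,d_2)$ and $T_i\in\M{d_2}$ invertible. Put $\Psi:=\mathrm{Ad}_{T_1}\circ\Phi_1$; this is EB, satisfies $\Psi\leq_{EB}\Phi$ since $\Phi-\Psi=\mathrm{Ad}_{T_2}\circ\Phi_2$ is EB, and $\Psi(I)=T_1^*T_1$ is invertible. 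Hypothesis $(ii)$ provides an invertible $Z\in\M{d_2}$ with $\Psi=\mathrm{Ad}_Z\circ\Phi$. Evaluating at $I$ gives $T_1^*T_1=Z^*Z$, so $U:=ZT_1^{-1}$ satisfies $U^*U=I$ and is therefore unitary in $\M{d_2}$. Cancelling the invertible $T_1$ from $T_1^*\Phi_1(X)T_1=T_1^*U^*\Phi(X)UT_1$ yields $\Phi_1=\mathrm{Ad}_U\circ\Phi$, and the analogous argument for $\Phi_2$ completes the proof.

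The principal point of care is ensuring EB-preservation under $\mathrm{Ad}_{T^{-1}}$ on the target algebra, which is immediate from the rank-one Kraus characterisation in Theorem \ref{thm-EBmap-char}(iii); the only other subtlety is the possible non-invertibility of $I-R$, which is handled once and for all by the small-$\epsilon$ scaling reduction. With these two observations in place, the argument runs in close parallel with the classical characterisation of $C^*$-extreme points of $\mathrm{UCP}(d_1,d_2)$.
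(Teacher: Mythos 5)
Your proposal is correct, and for the direction $(ii)\Rightarrow(i)$ it coincides with the paper's argument (the paper handles $n$ terms directly rather than invoking Proposition \ref{prop-C-extr-abs-char}, but the computation $Z^*Z=T_i^*T_i$, $U:=ZT_i^{-1}$ unitary, is the same). For $(i)\Rightarrow(ii)$ your route is genuinely leaner than the paper's. Both proofs rest on the same splitting $T_1=(\lambda\Psi(I))^{1/2}$, $T_2=(I-\lambda\Psi(I))^{1/2}$ with a scalar $\lambda=1-\epsilon\in(0,1)$ inserted precisely to make $I-\lambda\Psi(I)$ invertible, and both must check that $\mathrm{Ad}_{T_i^{-1}}$ preserves the EB property and that $\Phi-\lambda\Psi=(\Phi-\Psi)+(1-\lambda)\Psi$ is still EB. The difference is that the paper first invokes Arveson's Radon--Nikodym theorem to write $\Psi=V^*T\pi(\cdot)V$ through the minimal Stinespring dilation of $\Phi$, and then extracts $Z$ by an argument involving uniqueness of minimal dilations (constructing $V_1$, comparing $(\pi|_{\mcl H},V_1,\mcl H)$ with $(\pi,V,\mcl K)$, etc.); you instead observe that once $C^*$-extremality yields a unitary $U$ with $\Phi_1=\mathrm{Ad}_U\circ\Phi$, the identity $\Psi=\mathrm{Ad}_{T_1}\circ\Phi_1=\mathrm{Ad}_{UT_1}\circ\Phi$ gives $Z=UT_1$ by pure algebra, with no dilation theory at all. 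This is a legitimate simplification: the dilation machinery in the paper's proof is not actually needed for the conclusion, since all the data entering the splitting ($\Psi(I)$ and $(\Phi-\Psi)(I)$) and the final identification of $Z$ are available directly from the maps. What the paper's heavier route buys is essentially nothing here beyond consistency with the standard UCP-case template; your version is shorter and equally rigorous.
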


\begin{proof}
 $(i)\Rightarrow (ii)$  Let $\Psi:\M{d_1}\to\M{d_2}$ be an EB-map with $\Psi\leq_{EB}\Phi$ and $\Psi(I)$ be invertible. Let $(\pi,V,\mcl{K})$ be the minimal Stinespring dilation of $\Phi$. (Note that here $\mcl{K}$ can chosen to be finite dimensional.) Since $\Psi\leq_{CP}\Phi$  there exists a positive contraction $T\in (\pi(\M{d_1}))'\subseteq\B{\mcl{K}}$ such that 
 \begin{align*}
      \Psi(X)=V^*T\pi(X)V=\mathrm{Ad}_{T^{\frac{1}{2}}V}\circ\pi(X)\qquad\forall~X\in\M{d_1}.
 \end{align*}
 Fix a scalar $\lambda\in  (0,1)$. Then $\lambda V^*TV=\lambda \Psi(I)$ and $I-\lambda V^*TV$ are invertible positive  contractions in $\M{d_2}$, and hence $T_1=(\lambda V^*TV)^\frac{1}{2}$ and $T_2=(I-\lambda V^*TV)^\frac{1}{2}$  are also invertible matrices  with $T_1^* T_1+T_2^* T_2=I$.
 Define $\Phi_i:\M{d_1}\to\M{d_2}$ by 
 \begin{align*}
      \Phi_1:=\lambda \mathrm{Ad}_{T_1^{-1}}\circ\Psi 
      \qquad\mbox{and}\qquad
      \Phi_2:= \mathrm{Ad}_{T_2^{-1}}\circ (\Phi-\lambda \Psi)
 \end{align*}
 Since $\Psi$ and $\Phi-\Psi$ are EB-maps we observe that $\Phi_1$ and $\Phi_2$ are unital EB-maps such that 
 $$\mathrm{Ad}_{T_1}\circ\Phi_1+\mathrm{Ad}_{T_2}\circ \Phi_2=\Phi.$$
 As $\Phi$ is a $C^*$-extreme point  there exists a unitary $W\in\M{d_2}$ such that $\Phi=\mathrm{Ad}_{W}\circ\Phi_1$. Let  $V_1=\sqrt{\lambda}T^\frac{1}{2}VT_1^{-1}W\in\B{\mbb{C}^{d_2},\mcl{K}}$. Note that  
 \begin{align*}
     \mathrm{Ad}_{V_1}\circ\pi
     =\lambda \mathrm{Ad}_W\circ \mathrm{Ad}_{T_1^{-1}}\circ \mathrm{Ad}_{T^{\frac{1}{2}}V}\circ\pi
     =\lambda \mathrm{Ad}_{W}\circ \mathrm{Ad}_{T_1^{-1}}\circ\Psi
     =\mathrm{Ad}_W\circ\Phi_1
     =\Phi
 \end{align*}
 and $V_1^*V_1=\Phi(I)=I$.  Set $\mcl{H}=\cspan\{\pi(X)V_1z: z\in\mathbb{C}^{d_2}, X\in\M{d_2}\}\subseteq\mcl{K}$. Then $(\pi|_{\mcl{H}}, V_1,\mcl{H})$ is the  minimal Stinespring dilation of $\Phi$, and hence there exists a unitary $U:\mcl{H}\to\mcl{K}$ such that $$UV_1=V,~~ \mathrm{Ad}_U\circ \pi=\pi\vert_{\mcl{H}}.$$ Now, since $\mcl{K}$ is finite dimensional, both $\mcl{H}$ and $\mcl{K}$ having same dimension implies that $\mcl{H}=\mcl{K}$. Therefore, we have $U\pi(X)=\pi(X)U$ for all $X\in \M{d_1}$, and 
 \begin{align*}
     V=UV_1
      =\sqrt{\lambda}UT^\frac{1}{2}VT_1^{-1}W
      =UT^\frac{1}{2}VZ^{-1}, 
 \end{align*}
 where $Z=\frac{1}{\sqrt{\lambda}}W^*T_1\in\M{d_2}$. Clearly $Z$ is invertible, and 
 \begin{align*}
     \mathrm{Ad}_Z\circ\Phi(X)
     &=Z^*V^*\pi(X)(UU^*)VZ\\
     &=(U^*VZ)^*\pi(X)(U^*VZ)\\
     &=(T^{\frac{1}{2}}V)^*\pi(X)(T^{\frac{1}{2}}V)\\
     &=V^*T\pi(X)V\\
     &=\Psi(X)
 \end{align*}
 for all $X\in\M{d_1}$.\\
 $(ii)\Rightarrow(i)$ Assume that $\Phi=\sum_{i=1}^{n}\mathrm{Ad}_{T_i}\circ \Phi_i$, where $\Phi_i$ are unital EB-maps and $T_i\in\M{d_2}$ are invertible with $\sum_{i}^{n}T_i^*T_i=I$. Fix $1\leq i\leq n$. Then $\Psi:=\mathrm{Ad}_{T_i}\circ\Phi_i$ is an EB-map with $\Psi(I)=T_i^*T_i$ is invertible and $\Phi-\Psi=\sum_{j\neq i}\mathrm{Ad}_{T_j}\circ\Phi_j$ is EB. Hence, by assumption, there exists an invertible matrix $Z\in\M{d_2}$ such that $\Psi=\mathrm{Ad}_Z\circ\Phi$. Note that 
 \begin{align*}
   (ZT_i^{-1})^*ZT_i^{-1}
     =(T_i^*)^{-1}\Psi(I)T_i^{-1}
     =(T_i^*)^{-1}T_i^*T_iT_i^{-1}
     =I,
 \end{align*}
 so that $U:=ZT_i^{-1}\in\M{d_2}$ is a unitary. Further,
 \begin{align*}
     \mathrm{Ad}_U\circ\Phi   
       =\mathrm{Ad}_{T^{-1}}\circ \mathrm{Ad}_Z\circ\Phi
       =\mathrm{Ad}_{T^{-1}}\circ\Psi
       =\Phi_i,
 \end{align*}
 that is, $\Phi_i\cong \Phi$. Since $1\leq i\leq n$ is arbitrary we conclude that  $\Phi\in\mathrm{UEB}_{C^*-ext}(d_1,d_2)$.
\end{proof}

\begin{eg}
 To illustrate Theorem \ref{thm-C-extrm-char} consider the map $\Phi:\M{3}\to\M{3}$ defined by
 \begin{align*}
    \Phi(X)=\Matrix{x_{11}&0&0\\0&x_{11}&0\\0&0&x_{33}}=\tr(XE_{11})(E_{11}+E_{22})+\tr(XE_{33})E_{33}    
 \end{align*}
 for all $X=[x_{ij}]\in\M{3}$. We will prove in Theorem \ref{thm-EB-ext-decomp} below that $\Phi\in\mathrm{UEB}_{C^*-ext}(3)$. Now, let $t_{33}\in(0,1)$ and $\Matrix{t_{11}&t_{22}\\t_{21}&t_{22}}\in\M{2}$ be an invertible positive contraction. Define $\Psi:\M{3}\to\M{3}$ by
 \begin{align*}
    \Psi(X)=\Matrix{t_{11}x_{11}&t_{12}x_{11}&0\\t_{21}x_{11}&t_{22}x_{11}&0\\0&0&t_{33}x_{33}}
          =\tr(XE_{11})\Matrix{t_{11}&t_{12}&0\\t_{21}&t_{22}&0\\0&0&0}+\tr(XE_{33})\Matrix{0&0&0\\0&0&0\\0&0&t_{33}}.
 \end{align*}
 Then $\Psi$ is an EB-map such that $\Psi(I)$ is invertible and $\Psi\leq_{EB} \Phi$. Note that  $\Psi=\mathrm{Ad}_{\sqrt{\Psi(I)}}\circ\Phi$. 
\end{eg}

\begin{thm}\label{thm-EB-ext-decomp}
 Given $\Phi\in\mathrm{UEB}(d_1,d_2)$ the following conditions are equivalent: 
 \begin{enumerate}[label=(\roman*)]
     \item $\Phi\in \mathrm{UEB}_{C^*-ext}(d_1,d_2)$. 
     \item There exists a Choi-Kraus decomposition,  $\Phi=\sum_{i=1}^{d_2}\mathrm{Ad}_{V_i}$, 
 with rank-one operators $V_i\in\M{d_1\times d_2}$ such that given any $k<d_2$ and $\{i_1,i_2,\cdots, i_k\}\subset \{1, 2,\cdots, d_2\}$, the matrix $\sum_{j=1}^k V_{i_j}^*V_{i_j}$ is not invertible.
     \item EB-rank$(\Phi)=d_2$.
     \item Choi-rank$(\Phi)=d_2$.
     \item There exist (distinct) pure states $\phi_i:\M{d_1}\to\mbb{C}$ and mutually orthogonal projections $P_i\in\M{d_2}$ with $\sum_{i=1}^n P_i=I$ such that $$\Phi(\cdot)=\sum_{i=1}^n\phi_i(\cdot)P_i,$$
     where $n\leq d_2$.
     \item There exist unit vectors $\{u_i: 1\leq i\leq d_2\}\subseteq\mbb{C}^{d_1}$ and an orthonormal basis $\{v_i: 1\leq i\leq d_2\}\subseteq\mbb{C}^{d_2}$ such that 
               $$\Phi(X)=\sum_{i=1}^{d_2}\ip{u_i,Xu_i}\ranko{v_i}{v_i}$$
               for all $X\in\M{d_1}$.
     \item There exist pure states $\phi_i:\M{d_1}\to\mbb{C}$ such that $\Phi\cong\oplus_{i=1}^{d_2}\phi_i$.
 \end{enumerate}
\end{thm}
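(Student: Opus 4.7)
The strategy is to split the seven conditions into three clusters: the structural reformulations among $(v)$, $(vi)$, $(vii)$; the rank equivalences linking these to $(ii)$, $(iii)$, $(iv)$; and the $C^*$-extremity equivalence $(i)\Leftrightarrow(v)$, treating each separately. The reformulations $(v)\Leftrightarrow(vi)\Leftrightarrow(vii)$ are direct translations: writing $\phi_k=\ip{u_k,\cdot\,u_k}$ with $\|u_k\|=1$ and choosing ONBs $\{v_j^{(k)}\}$ of each $\ran{P_k}$ gives an ONB of $\mbb{C}^{d_2}$ (since $\sum P_k=I$ and the $P_k$ are mutually orthogonal), yielding $(vi)$ after reindexing and $(vii)$ after the corresponding unitary change of basis; going backwards, $(vi)\Rightarrow(v)$ comes from grouping indices with proportional $u_i$'s.

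For the rank cluster, the forward chain $(v)\Rightarrow(ii)\Rightarrow(iii)\Rightarrow(iv)$ is a book-keeping argument using the rank-one Kraus operators $V_{k,j}=\ranko{u_k}{v_j^{(k)}}$: the squares $V_{k,j}^*V_{k,j}=\ranko{v_j^{(k)}}{v_j^{(k)}}$ are mutually orthogonal rank-one projections, so any proper sub-sum is a projection of rank strictly less than $d_2$, hence non-invertible, giving $(ii)$; then EB-rank$(\Phi)\leq d_2$ together with the standing $d_2\leq$ Choi-rank$(\Phi)\leq$ EB-rank$(\Phi)$ forces equalities throughout, giving $(iii)$ and $(iv)$. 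The substantive return is $(iv)\Rightarrow(iii)$: start with a Holevo decomposition into pure states $\Phi=\sum_k\phi_k(\cdot)R_k$, note that $C_\Phi=\sum_k\ranko{\overline{u_k}}{\overline{u_k}}\otimes R_k$ bounds Choi-rank$(\Phi)$ above by $\sum_k\mathrm{rank}(R_k)$, and use the structure of the Kraus space of $\Phi$ (of dimension $d_2$ by $(iv)$ and admitting a rank-one basis by EB) to extract a Holevo decomposition with $\sum_k\mathrm{rank}(R_k)=d_2$. Once $(iii)$ is in hand, $(vi)$ follows because the unital condition $\sum_{i=1}^{d_2}\ranko{v_i}{v_i}=I_{d_2}$ forces the square matrix $[v_1\vert\cdots\vert v_{d_2}]$ to satisfy $VV^*=I$, making $\{v_i\}$ an ONB.

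The equivalence $(v)\Leftrightarrow(i)$ uses Theorems~\ref{thm-Rad-Nik} and \ref{thm-C-extrm-char}. For the easy direction $(v)\Rightarrow(i)$: every EB-map $\Psi\leq_{EB}\Phi$ with $\Psi(I)$ invertible equals $\mathrm{Ad}_{\sqrt{R}}\circ\Phi$ for an invertible positive contraction $R=\Psi(I)\in\ran{\Phi}'$ by Theorem~\ref{thm-Rad-Nik}(iv), so Theorem~\ref{thm-C-extrm-char}(ii) yields $(i)$. The main obstacle is the converse $(i)\Rightarrow(v)$. From a Holevo form $\Phi=\sum_{k=1}^m\phi_k(\cdot)R_k$ with distinct pure states, the plan is to probe $C^*$-extremity with the family $\Psi_\alpha=\sum_k\alpha_k\phi_k(\cdot)R_k$ for $(\alpha_k)\in(0,1]^m$ close to $(1,\ldots,1)$: each $\Psi_\alpha$ is EB with $\Psi_\alpha\leq_{EB}\Phi$ and $\Psi_\alpha(I)=\sum\alpha_k R_k$ invertible, so Theorem~\ref{thm-C-extrm-char} produces invertible $Z_\alpha$ with $\Psi_\alpha=\mathrm{Ad}_{Z_\alpha}\circ\Phi$. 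After reducing to a sub-family where the $\phi_k$'s are linearly independent as functionals, matching coefficients gives $Z_\alpha^*R_kZ_\alpha=\alpha_kR_k$ for every $k$; specialising $\alpha_{k_0}\in(0,1)$ with the rest equal to $1$ yields $Z^*Z=I-(1-\alpha_{k_0})R_{k_0}$, and the polar decomposition of $Z$ reduces the $k=k_0$ equation to $U^*R_{k_0}U=\alpha_{k_0}R_{k_0}(I-(1-\alpha_{k_0})R_{k_0})^{-1}$. Since the map $f(x)=\alpha_{k_0}x/(1-(1-\alpha_{k_0})x)$ is a strictly increasing bijection of $[0,1]$, the invariance of the eigenvalue multiset of $R_{k_0}$ under unitary conjugation forces $f$ to fix each eigenvalue, so every eigenvalue of $R_{k_0}$ is $0$ or $1$ and $R_{k_0}$ is a projection; applied to every $k_0$ and combined with $\sum R_k=I$, pairwise orthogonality follows from the elementary fact that projections satisfying $R_j+R_k\leq I$ obey $R_jR_k=0$. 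The principal technical difficulty is the linear-independence reduction for the $\phi_k$'s, which requires isolating a sub-decomposition of $\Phi$ where coefficient comparison is legitimate before the spectral argument applies.
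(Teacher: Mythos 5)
Your overall architecture does close the logical cycle, and much of it coincides with the paper: the equivalences $(v)\Leftrightarrow(vi)\Leftrightarrow(vii)$, the chain $(v)\Rightarrow(ii)\Rightarrow(iii)$, and the direction $(v)\Rightarrow(i)$ via Theorems \ref{thm-Rad-Nik} and \ref{thm-C-extrm-char} are exactly the paper's arguments. But the two implications you yourself single out as substantive both have genuine gaps. For $(iv)\Rightarrow(iii)$, your plan is to use that the Kraus space of $\Phi$ has dimension $d_2$ and ``admits a rank-one basis by EB'' and to extract from this a rank-one Kraus decomposition with $d_2$ terms. This is a non sequitur: a basis of rank-one operators for the \emph{span} of the Kraus operators does not yield a Kraus decomposition of $\Phi$ consisting of those operators, since minimal Kraus decompositions are related by unitary mixing and a unitary recombination of rank-one operators need not be rank-one. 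The possibility that EB-rank strictly exceeds Choi-rank (equivalently $\ell(C_\Phi)>\mathrm{rank}(C_\Phi)$, which does occur \cite{DTT00}) is precisely the failure mode of your claim. The paper instead invokes \cite[Lemma 8]{HSR03}, whose hypothesis is the special circumstance that the partial trace $(\id\otimes\tr)(C_{\Phi^*})=I_{d_2}$ has the same rank $d_2$ as $C_{\Phi^*}$; that extra input is what your sketch is missing.

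For $(i)\Rightarrow(v)$, your perturbation family $\Psi_\alpha=\sum_k\alpha_k\phi_k(\cdot)R_k$ and the ensuing spectral argument (the fixed-point analysis of $f(x)=\alpha x/(1-(1-\alpha)x)$ forcing each $R_{k_0}$ to be a projection, then orthogonality from $\sum_kR_k=I$) is a genuinely different and attractive idea, and it is correct \emph{granting} the identity $Z_\alpha^*R_kZ_\alpha=\alpha_kR_k$. But that coefficient comparison requires the functionals $\phi_k$ to be linearly independent, and distinct pure states need not be: already in $\M{2}$ one has $\ranko{e_1}{e_1}+\ranko{e_2}{e_2}=\ranko{w_+}{w_+}+\ranko{w_-}{w_-}$ for $w_\pm=(e_1\pm e_2)/\sqrt{2}$, so a Holevo form can involve distinct pure states satisfying a linear relation. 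The reduction you flag as the principal technical difficulty is not routine: re-expressing $\Phi$ over a maximal independent subfamily produces coefficients that are signed combinations of the $R_j$, so positivity of the new coefficients, the EB property of the perturbed maps, and the domination $\Psi_\alpha\leq_{EB}\Phi$ can all be lost. The paper sidesteps this entirely by working with an EB-rank-minimal rank-one decomposition $\Phi=\sum_{i=1}^r\mathrm{Ad}_{V_i}$: Theorem \ref{thm-C-extrm-char} shows no proper sub-sum $\sum_jV_{i_j}^*V_{i_j}$ can be invertible (else conjugating the sub-map by $Z^{-1}$ contradicts minimality of $r$), a Douglas range-inclusion argument exhibits a $d_2$-element subset with invertible sum, forcing $r=d_2$, and unitality then makes the $v_i$ an orthonormal basis, giving $(v)$ by grouping. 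As written, your proof is incomplete at exactly this point.
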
 

\begin{proof}
 $(i)\Rightarrow (ii)$ Let $r=$EB-rank$(\Phi)$ and $\Phi=\sum_{i=1}^r \mathrm{Ad}_{V_i}$, where  $V_i=\ranko{u_i}{v_i}\in\M{d_1\times d_2}$ with $u_i\in\mbb{C}^{d_1}$ and $v_i\in\mbb{C}^{d_2}$ for all $1\leq i\leq r$. Note that $\sum_{i=1}^rV_i^*V_i=\Phi(I)=I$ is invertible.
 
 \noindent\ul{Step 1:} We show that given any $k<r$ and $\{i_1,i_2,\cdots, i_k\}\subset \{1, 2,\cdots, r\}$, the matrix $\sum_{j=1}^k V_{i_j}^*V_{i_j}$ is not invertible. Suppose not. Then consider the map  $\Psi:=\sum_{j=1}^k\mathrm{Ad}_{V_{i_j}}$. Clearly $\Psi$ is an EB-map and $\Psi\leq_{EB}\Phi$. Since $\Psi(I)=\sum_{j=1}^k V_{i_j}^*V_{i_j}$ is invertible, by Theorem \ref{thm-C-extrm-char}, there exists an  invertible matrix $Z\in\M{d_2}$ such that $\Psi=\mathrm{Ad}_Z\circ\Phi$, therefore
 \begin{align*}
    \Phi(X)=\mathrm{Ad}_{Z^{-1}}\circ\Psi(X)
           =\sum_{j=1}^k(V_{i_j}Z^{-1})^*X(V_{i_j}Z^{-1}).
 \end{align*}
 This contradicts the minimality of $r$ as $V_{i_j}Z^{-1}$'s are rank-one operators. \\
 \ul{Step 2:} We show that $r=d_2$. With out of generality assume that $\{v_1,v_2,\cdots,v_q\}$ is  the maximal linearly independent set in $\{v_i: 1\leq i\leq r\}$. Then, since $I=\Phi(I)=\sum_{i=1}^r\norm{u_i}^2\ranko{v_i}{v_i}$, we have 
 \begin{align*}
        \lspan\{v_i: 1\leq i\leq q\}=\lspan\{v_i: 1\leq i\leq r\}=\mbb{C}^{d_2}     
 \end{align*}
 so that $q=d_2$. But, as $V_j^*V_j\leq\sum_{i=1}^qV_i^*V_i$, from \cite{Dou66}, we have $v_j\in\ran{V_j^*V_j}\subseteq\ran{\sum_{i=1}^qV_i^*V_i}$ for all $1\leq j\leq q$, so that
 \begin{align*}
     \ran{\sum_{i=1}^qV_i^*V_i}=\lspan\{v_i: 1\leq i\leq q\}=\mbb{C}^{d_2}.
 \end{align*}
 Thus, $\sum_{i=1}^qV_i^*V_i$ is invertible, hence from Step 1, it follows that $r=q=d_2$.\\
 $(ii)\Rightarrow(iii)$ Follows as $d_2\leq$Choi-rank$(\Phi)\leq$EB-rank$(\Phi)\leq d_2$, where the last inequality follows from the assumption.\\
 $(iii)\Leftrightarrow(iv)$  Clearly, if EB-rank$(\Phi)=d_2$, then Choi-rank$(\Phi)=d_2$. Conversely assume that Choi-rank$(\Phi)=d_2$.  Since $\Phi^*:\M{d_2}\to\M{d_1}$ is trace preserving EB-map we have $(\id\otimes\tr)(C_{\Phi^*})=I_{d_2}$. Thus both $C_{\Phi^*}$ and $((\id\otimes\tr)(C_{\Phi^*}))$ has rank $d_2$. Hence,  from \cite[Lemma 8]{HSR03}, we have $d_2\geq$EB-rank$(\Phi)\geq$Choi-rank$(\Phi)=d_2$.\\
 $(iii)\Rightarrow (v)$ Suppose $\Phi=\sum_{i=1}^{d_2}\mathrm{Ad}_{V_i}$ with  $V_i=\ranko{u_i}{v_i}$, where $u_i\in\mbb{C}^{d_1}$ and $v_i\in\mbb{C}^{d_2}$. With out loss of generality assume that $u_i$'as are unit vectors. Then 
 $$I=\sum_{i=1}^{d_2}V_i^*V_i=\Matrix{\ip{w_k,w_l}}_{k,l=1}^{d_2},$$
 where $w_j$'s are the columns of the adjoint of $V:=\Matrix{v_1&v_2&\cdots&v_{d_2}}\in\M{d_2}$, so that  $V$ is a unitary. Thus,
 \begin{align*}
    \Phi(X)=\sum_{i=1}^{d_2}V_i^*XV_i=\sum_{i=1}^{d_2}\ip{u_i,Xu_i}\ranko{v_i}{v_i}=\sum_{i=1}^{d_2}\phi_i(X)P_i  
 \end{align*}
 for all $X\in\M{d_1}$, where $\phi_i(X)=\ip{u_i,Xu_i}$ are pure states and $P_i=\ranko{v_i}{v_i}\in\M{d_2}$ are mutually orthogonal projections such that $\sum_iP_i=I$. Now, whenever $\phi_i=\phi_j$ for some $i\neq j$, then we replace $\phi_i(\cdot)P_i+\phi_j(\cdot)P_j$ by $\phi_i(\cdot)(P_i+P_j)$ in the above sum and thereby assume that all $\phi_i$'s are distinct.\\
 $(v) \Rightarrow (i)$ Suppose there exist distinct pure states $\phi_i:\M{d_1}\to\mbb{C}$ and mutually orthogonal projections $P_i\in\M{d_2}$ with $\sum_{i=1}^nP_i=I$ such that  $$\Phi(X)=\sum_{i=1}^n\phi_i(X)P_i,\qquad\forall~X\in\M{d_1}.$$
 Let $\Psi:\M{d_1}\to\M{d_2}$ be an EB-map with $\Psi\leq_{EB}\Phi$ and $\Psi(I)$ invertible. By Theorem \ref{thm-Rad-Nik},  there exists a positive contraction $R\in\ran{\Phi}'$ such that $\Psi=\mathrm{Ad}_{\sqrt{R}}\circ\Phi$. Note that $R=\Psi(I)$, which is invertible. Hence $Z:=\sqrt{R}$ is an invertible positive contraction such that $\Psi=\mathrm{Ad}_{Z}\circ\Phi$.   From Theorem \ref{thm-C-extrm-char}, it follows that $\Phi\in\mathrm{UEB}_{C^*-ext}(d_1,d_2)$.\\
 $(v)\Leftrightarrow (vi)\Leftrightarrow (vii)$ Follows from basic linear algebra. This completes the proof.
\end{proof}

\begin{eg}
 Define  $\Phi:\M{2}\to\M{2}$ by $\Phi(X)=\phi(X)I$, where $\phi(X)=\frac{x_{11}+x_{22}}{2}$ for all $X=[x_{ij}]\in\M{2}$. Note that $\Phi_1,\Phi_2:\M{2}\to\M{2}$ defined by 
 \begin{align*}
     \Phi_1(X)=\Matrix{x_{11}&0\\0&x_{22}}
     \qquad\mbox{and}\qquad
     \Phi_2(X)=\Matrix{x_{22}&0\\0&x_{11}}\qquad\forall~X=[x_{ij}]\in\M{2}
 \end{align*}
 are unital EB-maps such that $\Phi=\frac{1}{2}\Phi_1+\frac{1}{2}\Phi_2$, hence $\Phi\notin\mathrm{UEB}_{C^*-ext}(2)$. Though $\phi$ is a state it is not pure. Thus, the condition 'pure' cannot be dropped in Theorem \ref{thm-EB-ext-decomp}. 
\end{eg}

\begin{note}
 If $\Phi\in\mathrm{UEB}_{C^*-ext}(d_1,d_2)$, then from Theorem \ref{thm-EB-ext-decomp} Choi-rank$(\Phi)=$EB-rank$(\Phi)$. But the converse is not true in general. For example, consider the map $\Phi\in\mathrm{UEB}(d)$ given by $$\Phi(X)=\frac{\tr(X)}{d}I=\frac{1}{d}\sum_{i,j=1}^d\mathrm{Ad}_{E_{ij}}(X).$$   Note that Choi-rank$(\Phi)=$EB-rank$(\Phi)=d^2$. But,  from Theorem \ref{thm-EB-ext-decomp} (iii), $\Phi\notin\mathrm{UEB}_{C^*-ext}(d)$.
\end{note}

\begin{rmk}\label{rmk-Cext-CQ}
 Given  a set $\ul{u}=\{u_i\}_{i=1}^d\subseteq\mbb{C}^d$ of unit vectors and  an orthonormal basis $\ul{v}=\{v_i\}_{i=1}^d\subseteq\mbb{C}^d$ the map $\mcl{E}_{\ul{v},\ul{u}}$ is called \emph{extreme CQ-channel} in \cite{HSR03}. It is shown in \cite[Theorem 5]{HSR03} that  $\mcl{E}_{\ul{v},\ul{u}}$'s are linear extreme points of the convex set of trace-preserving EB-maps; in other words, $\mcl{E}_{\ul{v},\ul{u}}^*=\mcl{E}_{\ul{u},\ul{v}}\in\mathrm{UEB}_{ext}(d)$. But,  Theorem \ref{thm-EB-ext-decomp}(vi) says that every element of 
 $\mathrm{UEB}_{C^*-ext}(d)$ is of the form  $\mcl{E}_{\ul{u},\ul{v}}$ for some $\ul{u},\ul{v}$. Thus, \cite[Theorem 5]{HSR03} combined with Theorem \ref{thm-EB-ext-decomp} leads to:
  \begin{enumerate}[label=(\roman*)]
     \item Let $\Phi\in\mathrm{UEB}_{C^*-ext}(d)$. Then $\Phi\in\mathrm{UCP}_{ext}(d)$ if and only if $\Phi=\mcl{E}_{\ul{u},\ul{v}}$ with $\ip{u_j,u_k}\neq 0$ for all $j,k$. 
     \item $\mathrm{UEB}(d)\cap\mathrm{UCP}_{ext}(d)\subseteq\mathrm{UCP}_{C^*-ext}(d)$.
     \item If $d=2$, then  $\mathrm{UEB}_{ext}(2)=\mathrm{UEB}_{C^*-ext}(2)$.
     \item If $d\geq 3$, then there are linear extreme points in $\mathrm{UEB}(d)$ which are not $C^*$-extreme points.
  \end{enumerate}
\end{rmk}
 
\begin{eg}\label{eg-Linext-not-Cext}
 Consider the unital trace-preserving EB-map $\Phi:\M{3}\to\M{3}$ defined by 
 \begin{align*}
     \Phi(X)=\frac{3}{4}\sum_{i=1}^4\ip{v_i,Xv_i}\ranko{v_i}{v_i}
                =\frac{1}{3}\Matrix{\tr(X)&x_{12}+x_{21}&x_{13}+x_{31}\\x_{21}+x_{12}&\tr(X)&x_{23}+x_{32}\\x_{31}+x_{13}&x_{32}+x_{23}&\tr(X)}
 \end{align*}
 for all $X=[x_{ij}]\in\M{3}$, where
 \begin{align*}
    v_1=\frac{1}{\sqrt{3}}\Matrix{1\\1\\1},
    \quad
    v_2=\frac{1}{\sqrt{3}}\Matrix{1\\-1\\-1}
    \quad
    v_3=\frac{1}{\sqrt{3}}\Matrix{-1\\1\\-1}
    \quad
    v_4=\frac{1}{\sqrt{3}}\Matrix{-1\\-1\\1}.
 \end{align*}
 It is shown in \cite[Couterexample]{HSR03} that $\Phi=\Phi^*\in\mathrm{UEB}_{ext}(3)$ but not an extreme CQ, i.e., $\Phi\notin\mathrm{UEB}_{C^*-ext}(3)$. Note that Choi-rank$(\Phi)=4$.
\end{eg}

\begin{note}\label{note-CP-EB-ext}
 Let $\Phi\in\mathrm{UEB}(d_1,d_2)\subset \mathrm{UCP}(d_1,d_2)$. Clearly,
 \begin{enumerate}[label=(\roman*)]
    \item  $\Phi\in\mathrm{UCP}_{C^*-ext}(d_1,d_2)$ implies that $\Phi\in\mathrm{UEB}_{C^*-ext}(d_1,d_2)$.
    \item  $\Phi\in\mathrm{UCP}_{ext}(d_1,d_2)$ implies that $\Phi\in\mathrm{UEB}_{ext}(d_1,d_2)$.
 \end{enumerate}
  But the converse of the above statements may not hold. See the example below. 
\end{note}

\begin{eg}\label{eg-Cext-not-Lext}
 Consider the map $\Phi:\M{2}\to\M{2}$ defined by
 \begin{align*}
     \Phi(X):=\Matrix{x_{11}&0\\0&x_{22}}
             =\sum_{i=1}^2\ip{e_i,Xe_i}\ranko{e_i}{e_i}
 \end{align*}
 for all $X=[x_{ij}]\in\M{2}$. From the above theorem $\Phi\in\mathrm{UEB}_{C^*-ext}(2)=\mathrm{UEB}_{ext}(2)$. But, it is observed in \cite{FaMo97} that  $\Phi\notin\mathrm{UCP}_{ext}(2)\supseteq\mathrm{UCP}_{C^*-ext}(2)$. In fact, $\Phi$ is a convex combination of unital CP-maps: 
 \begin{align*}
    \Phi(X)=\Matrix{x_{11}&0\\0&x_{22}}
          =\frac{1}{2}\Matrix{x_{11}&x_{12}\\x_{21}&x_{22}}+\frac{1}{2}\Matrix{x_{11}&-x_{12}\\-x_{21}&x_{22}}
          =\frac{1}{2}(\id(X)+\mathrm{Ad}_V(X))
 \end{align*}
 where $V=\sMatrix{1&0\\0&-1}\in\M{2}$. Thus $\mathrm{UEB}(2)\cap\mathrm{UCP}_{C^*-ext}(2)\subsetneqq\mathrm{UEB}_{C^*-ext}(2)$ and $\mathrm{UEB}(2)\cap\mathrm{UCP}_{ext}(2)\subsetneqq\mathrm{UEB}_{ext}(2)$.
\end{eg}

\begin{cor}\label{cor-C-extrm+irrd}
 Let $\Phi\in\mathrm{UEB}_{C^*-ext}(d_1,d_2)$. If $\Phi$ irreducible, then $d_2=1$ and consequently $\Phi$ is a pure state on $\M{d_1}$.  
\end{cor}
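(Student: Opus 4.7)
The plan is to apply the structure theorem \ref{thm-EB-ext-decomp}(v) to expose the range of $\Phi$ as a subset of an abelian subalgebra and then exploit irreducibility to collapse everything to a state.

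First, since $\Phi\in\mathrm{UEB}_{C^*-ext}(d_1,d_2)$, Theorem \ref{thm-EB-ext-decomp}(v) provides distinct pure states $\phi_1,\ldots,\phi_n:\M{d_1}\to\mbb{C}$ and nonzero mutually orthogonal projections $P_1,\ldots,P_n\in\M{d_2}$ with $\sum_{i=1}^nP_i=I_{d_2}$ such that $\Phi(X)=\sum_{i=1}^n\phi_i(X)P_i$ for all $X\in\M{d_1}$. In particular, $\ran{\Phi}\subseteq\lspan\{P_1,\ldots,P_n\}$, which is a commutative $*$-subalgebra of $\M{d_2}$. Since the $P_i$'s pairwise commute, each $P_j$ commutes with every element of $\lspan\{P_1,\ldots,P_n\}$, hence with every element of $\ran{\Phi}$; that is, $P_j\in\ran{\Phi}'$ for each $j$.

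Now suppose, toward a contradiction, that $n\geq 2$. Since the projections are nonzero, mutually orthogonal, and sum to $I_{d_2}$, we have $0\neq P_1\leq I_{d_2}-P_2\neq I_{d_2}$, so $P_1$ is a nontrivial projection in $\ran{\Phi}'$. This contradicts the irreducibility hypothesis $\ran{\Phi}'=\mbb{C}I_{d_2}$. Therefore $n=1$, which forces $P_1=I_{d_2}$ and $\Phi(X)=\phi_1(X)I_{d_2}$ for all $X\in\M{d_1}$.

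Finally, with $n=1$ we get $\ran{\Phi}=\mbb{C}I_{d_2}$, so $\ran{\Phi}'=\M{d_2}$. The irreducibility condition $\ran{\Phi}'=\mbb{C}I_{d_2}$ then forces $d_2=1$, and consequently $\Phi=\phi_1$ is a pure state on $\M{d_1}$. The argument is essentially a one-step deduction from the structure theorem; there is no substantive obstacle, the only minor point being to observe that irreducibility of $\Phi$ is incompatible with a nontrivial orthogonal decomposition $I_{d_2}=\sum P_i$ inside $\ran{\Phi}$.
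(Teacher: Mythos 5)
Your proof is correct and follows essentially the same route as the paper: both invoke the structure theorem to place $\ran{\Phi}$ inside the abelian algebra spanned by the orthogonal projections and then exhibit a non-scalar element of $\ran{\Phi}'$ to contradict irreducibility (the paper uses the rank-one form and the operator $A=\sum_k k\ranko{v_k}{v_k}$, while you use a projection $P_1$ and then handle the $n=1$ case by noting $\ran{\Phi}'=\M{d_2}$). The differences are cosmetic; no gap.
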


\begin{proof}
 Since $\Phi\in\mathrm{UEB}_{C^*-ext}(d_1,d_2)$, by Theorem \ref{thm-EB-ext-decomp}(iii), there exist unit vectors $\{u_i: 1\leq i\leq d_2\}\subseteq\mbb{C}^{d_1}$ and an orthonormal basis $\{v_i: 1\leq i \leq d_2\}\subseteq\mbb{C}^{d_2}$ such that 
 \begin{align}\label{eq-Phi-irr}
     \Phi(X)=\sum_{i=1}^{d_2}\ip{u_i,Xu_i}\ranko{v_i}{v_i}.
 \end{align}
 for all $X\in\M{d_1}$. If $d_2>1$, then let $A=\sum_{k=1}^{d_2}k\ranko{v_k}{v_k}\in\M{d_2}$. Then for all $X\in\M{d_1}$, 
 \begin{align*}
     A\Phi(X)=\sum_{k=1}^{d_2}k\ip{u_k,Xu_k}\ranko{v_k}{v_k}=\Phi(X)A,
 \end{align*}
 so that $A\in\ran{\Phi}'$. But $A\notin\mbb{C}I$, which is contradiction to the fact that $\Phi$ is irreducible. Hence $d_2=1$, consequently from \eqref{eq-Phi-irr}, we have $\Phi:\M{d_1}\to\mbb{C}$ and is a pure state. 
\end{proof} 

 Next we prove a quantum version of the classical Krein-Milman theorem, which asserts that every nonempty convex compact subset of  a locally convex topological vector space is closure of the convex hull of its extreme points. To prove a quantum analogue of this result for  $\mathrm{UEB}(d_1,d_2)$ we define the \emph{$C^*$-convex hull} of a subset $\mcl{S}\subseteq\mathrm{UEB}(d_1,d_2)$ as 
 $$\Big\{\sum_{i=1}^n\mathrm{Ad}_{T_i}\circ\Phi_i: \Phi_i\in\mcl{S}, T_i\in\M{d_2}\mbox{ with }\sum_{i=1}^nT_i^*T_i=I_{d_2}\Big\}.$$

\begin{thm}[Krein-Milman type theorem]\label{thm-KM}
 $\mathrm{UEB}(d_1,d_2)$ is the $C^*$-convex hull of its $C^*$-extreme points. 
\end{thm}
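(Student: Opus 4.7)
The plan is to read off a decomposition directly from the Holevo form, leveraging the fact that Theorem~\ref{thm-EB-ext-decomp}(v) lets us recognize certain very simple maps as $C^*$-extreme. Given $\Phi \in \mathrm{UEB}(d_1,d_2)$, I would start from the Holevo representation $\Phi(X) = \sum_{i=1}^m \tr(XF_i)\, R_i$ with $F_i \in \M{d_1}^+$ and $R_i \in \M{d_2}^+$ provided by Theorem~\ref{thm-EBmap-char}(iv), and refine it by spectrally decomposing each $F_i = \sum_j \lambda_{ij}\,\ranko{u_{ij}}{u_{ij}}$ into rank-one positives (with $u_{ij}$ unit and $\lambda_{ij}>0$) and absorbing the eigenvalues into the $R_i$'s. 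After relabeling indices this yields a refined Holevo representation
$$\Phi(X) \;=\; \sum_{k=1}^N \phi_k(X)\, R_k,$$
where each $\phi_k(X) = \ip{u_k,Xu_k}$ is a pure state on $\M{d_1}$ and $R_k \in \M{d_2}^+$, with $\sum_k R_k = \Phi(I_{d_1}) = I_{d_2}$ forced by unitality.

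The key observation is that for any pure state $\phi$ on $\M{d_1}$, the map $X \mapsto \phi(X) I_{d_2}$ is \emph{already} a $C^*$-extreme point of $\mathrm{UEB}(d_1,d_2)$: it fits Theorem~\ref{thm-EB-ext-decomp}(v) in the degenerate case $n = 1$, with a single pure state and the single projection $P_1 = I_{d_2}$. Accordingly, I would set $\Psi_k(X) := \phi_k(X) I_{d_2} \in \mathrm{UEB}_{C^*-ext}(d_1,d_2)$ and $T_k := R_k^{1/2} \in \M{d_2}$. Then $\sum_k T_k^* T_k = \sum_k R_k = I_{d_2}$ and
$$\sum_{k=1}^N \mathrm{Ad}_{T_k} \circ \Psi_k(X) \;=\; \sum_k \phi_k(X)\, T_k^* T_k \;=\; \sum_k \phi_k(X)\, R_k \;=\; \Phi(X),$$
realizing $\Phi$ as a finite $C^*$-convex combination of $C^*$-extreme points. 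The reverse inclusion is automatic since $\mathrm{UEB}(d_1,d_2)$ is $C^*$-convex.

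There is essentially no technical obstacle in this argument; the whole theorem reduces to two already-proved inputs, namely the Holevo form (Theorem~\ref{thm-EBmap-char}) and the structure theorem for $C^*$-extreme points (Theorem~\ref{thm-EB-ext-decomp}). The one conceptual point that has to be appreciated is that the \emph{trivial} single-pure-state maps $\phi(\cdot) I_{d_2}$ already qualify as $C^*$-extreme, so that the positive operators $R_k$ appearing in the Holevo form supply, via their square roots, the coefficients of a $C^*$-convex combination ``for free.'' Because the resulting decomposition is a finite sum, no closure operation is required, which makes this a strictly stronger conclusion than the classical Krein-Milman theorem would give.
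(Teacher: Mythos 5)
Your proposal is correct and follows essentially the same route as the paper: both proofs refine the Holevo form into a sum $\sum_k \phi_k(\cdot)R_k$ with $\phi_k$ pure, recognize the maps $\phi_k(\cdot)I_{d_2}$ as $C^*$-extreme via Theorem~\ref{thm-EB-ext-decomp}, and absorb the $R_k$ into the $C^*$-convex coefficients. The only (immaterial) difference is that the paper also splits each $R_k$ into rank-one pieces and uses $T_k=\sqrt{\lambda_k}\,\ranko{v_k}{v_k}$, whereas you take $T_k=R_k^{1/2}$ directly.
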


\begin{proof}
 Let $\Phi\in\mathrm{UEB}(d_1,d_2)$ with Holevo form $\Phi(X)=\sum_{i=1}^n\tr(XF_i)R_i$ for all $X\in\M{d_1}$.  Without loss of generality assume that $F_i=\ranko{u_i}{u_i}$ and  $R_i=\lambda_i\ranko{v_i}{v_i}$, where $u_i\in\mbb{C}^{d_1}$ and $v_i\in\mbb{C}^{d_2}$ are unit vectors, and $\lambda_i\in(0,\infty)$ for all $1\leq i\leq n$. Then, for all $X\in\M{d_1}$,
 \begin{align*}
    \Phi(X)=\sum_{i=1}^n\lambda_i\ip{u_i,Xu_i}\ranko{v_i}{v_i}
               =\sum_{i=1}^n\mathrm{Ad}_{T_i}\circ\Phi_i(X),
 \end{align*}
 where $\Phi_i=\ip{u_i,(\cdot)u_i}I$ and $T_i=\sqrt{\lambda_i}\ranko{v_i}{v_i}\in\M{d_2}$ are such that $\sum_{i=1}^nT_i^*T_i=\Phi(I)=I$. By Theorem \ref{thm-EB-ext-decomp}, each $\Phi_i$ is a $C^*$-extreme point. This completes the proof. 
\end{proof}

\section{Conclusion}
 
 Our main aim was to characterize the $C^*$-extreme points of the $C^*$-convex set of unital EB-maps between matrix algebras. To this end, we recalled the analogous results for the $C^*$-convex set of unital CP-maps in the finite-dimensional setup. In Propositions \ref{prop-C-extr-abs-char}, \ref{prop-Cext-Lext} we showed that some of the basic properties of $C^*$-extreme points of unital CP-maps hold for  unital EB-maps also. So one may think that EB-maps that are $C^*$-extreme points of unital CP-maps form the complete set of $C^*$-extreme points of unital EB-maps. But, in Example \ref{eg-Cext-not-Lext} we showed that this is not the case. This motivates us to study  $C^*$-extreme points of EB-maps independently. First, with the help of the technical Lemma \ref{lem-E-Phi-dominated}, we proved a Radon-Nikodym type theorem (Theorem \ref{thm-Rad-Nik}) for a particular class of EB-maps. As in the case of CP-maps, this theorem acts as the building block for characterizing  $C^*$-extreme points of EB-maps and we arrived at Theorem \ref{thm-C-extrm-char}. Finally, in Theorem \ref{thm-EB-ext-decomp}, we completely characterize  $C^*$-extreme points of $\mathrm{UEB}(d_1,d_2)$. Below we highlight two important characterizations: 
  \begin{itemize}
      \item Theorem \ref{thm-EB-ext-decomp} (iv) shows that a unital 
      EB-map is a $C^*$-extreme point if and only if its  Choi-rank equals $d_2$. This is a very useful result as typically it is much easier to compute the Choi-rank compared to  the EB-rank. No such result exists for $C^*$-extreme points of general unital CP-maps. 
      \item Theorem \ref{thm-EB-ext-decomp} (vii) proves that $C^*$-extreme EB-maps are just direct sums of pure states.  
  \end{itemize}
 We conclude with an analogue of classical Krein-Milman theorem (Theorem \ref{thm-KM}) for EB maps under $C^*$-convexity.\\

\noindent\textbf{Acknowledgments:}
 Bhat is supported by J C Bose Fellowship of SERB (Science and Engineering Research Board, India).   RD is supported by UGC (University of Grant Commission, India) with ref No 21/06/2015(i)/EU-V. NM thanks NBHM (National Board for Higher Mathematics, India) for financial support with ref No 0204/52/2019/R\&D-II/321. KS is partially supported by the IoE-CoE Project (No. SB20210797MAMHRD008573) from MHRD (Ministry of Human Resource Development, India) and partially by the MATRIX grant (File no. MTR/2020/000584) from SERB (India).

\bibliographystyle{alpha} 
 \newcommand{\etalchar}[1]{$^{#1}$}

\end{document}